\newcommand{\field}[1]{\mathbb{#1}}
\newcommand{\N}{\field{N}}
\newcommand{\Z}{\field{Z}}
\newcommand{\C}{\field{C}}
\def\({\left(}
\def\){\right)}
\newcommand{\sB}{\mathscr{B}}
\newcommand{\sC}{\mathscr{C}}
\newcommand{\sE}{\mathscr{E}}
\newcommand{\im}{\text{Im}}
\newcommand{\re}{\text{Re}}
\newcommand{\calC}{\mathcal{C}}
\newcommand{\calH}{\mathcal{H}}
\newcommand{\calU}{\mathcal{U}}
\newcommand{\calV}{\mathcal{V}}
\newcommand{\Prob}{\mathbf{P}}
\renewcommand{\o}{\overline}
\renewcommand{\o}{\overline}
\theoremstyle{plain}
\newtheorem{theorem}{Theorem}
\newtheorem*{theorem*}{Theorem}
\newtheorem{lemma}[theorem]{Lemma}
\newtheorem{proposition}[theorem]{Proposition}
\newtheorem{corollary}[theorem]{Corollary}
\newtheorem*{conjecture*}{Conjecture}
\theoremstyle{definition}
\theoremstyle{remark}
\newtheorem*{remark*}{Remark}
\newtheorem*{remarks}{Remarks}
\numberwithin{theorem}{section} \numberwithin{equation}{section}
\begin{document}

\title{Schur's partition theorem and mixed mock modular forms}
\author{Kathrin Bringmann}
\address{Mathematical Institute\\University of
Cologne\\ Weyertal 86-90 \\ 50931 Cologne \\Germany}
\email{kbringma@math.uni-koeln.de}
\author{Karl Mahlburg}
\address{Department of Mathematics \\
Louisiana State University \\
Baton Rouge, LA 70802\\ U.S.A.}
\email{mahlburg@math.lsu.edu}

\subjclass[2000] {05A15, 05A17, 11P82, 11P84, 60C05}

\date{\today}

\keywords{Rogers-Ramanujan identities; mock theta functions; Schur's partition theorem; integer partitions}

\thanks{The research of the first author was supported by the Alfried Krupp Prize for
Young University Teachers of the Krupp Foundation and an individual research grant from the ERC.  The second author was supported by NSF Grant DMS-1201435.}

\begin{abstract}

We study families of partitions with gap conditions that were introduced by Schur and Andrews, and describe their fundamental connections to combinatorial $q$-series and automorphic forms.  In particular, we show that the generating functions for these families naturally lead to deep identities for theta functions and Hickerson's universal mock theta function, which provides a very general answer to Andrews' Conjecture on the modularity of the Schur-type generating function.  Furthermore, we also complete the second part of Andrews' speculation by determining the asymptotic behavior of these functions.  In particular, we use Wright's Circle Method in order to prove families of asymptotic inequalities in the spirit of the Alder-Andrews Conjecture.  As a final application, we prove the striking result that the universal mock theta function can be expressed as a conditional probability in a certain natural probability space with an infinite sequence of independent events.
\end{abstract}

\maketitle

\section{Introduction and statement of results}

The famous Rogers-Ramanujan identities show the equality of a hypergeometric $q$-series and an infinite product.  If $r = 1 \text{ or } 2$, then the two identities can be simultaneously stated as \cite{RR19}
\begin{equation}
\label{E:RR}
\sum_{n\geq 0} \frac{q^{n^2 + (r-1)n}}{(q;q)_n} = \frac{1}{(q^{r}; q^5)_\infty (q^{5-r}; q^5)_\infty}.
\end{equation}
Throughout the paper we use for $n\in\N_0 \cup\{\infty\}$ the standard $q$-factorial notation $(a)_n = (a;q)_n := \prod_{j = 0}^{n-1} (1-aq^j),$ as well as the the additional shorthand $(a_1, \dots , a_r)_n := (a_1)_n \cdot \dots \cdot (a_r)_n$.

The Rogers-Ramanujan identities have had a tremendous influence throughout mathematics in the more than one hundred years since they were first discovered.  Generalizations and applications of the identities have inspired developments in combinatorial and analytic partition theory \cite{Gor61, Stem90}; the theory of infinite continued fractions \cite{AG93,Gor65}; the theory of symmetries and transformations for hypergeometric $q$-series \cite{And66, And75}; the exact solution of the hard hexagon model in statistical mechanics \cite{And81}; and vertex operator algebras \cite{LW81}.  Here we study yet another direction, as we focus on the role of identities such as \eqref{E:RR} in the theory of automorphic forms.

In general it is a very challenging problem to determine the automorphic properties of a hypergeometric $q$-series (for example, see the discussion of Nahm's Conjecture in Section II.3 of \cite{Zag06}).  From this perspective, the Rogers-Ramanujan identities are nothing short of incredible, as they equate a hypergeometric series written in ``Eulerian'' form on the left-hand side to an infinite product that is recognizable as a simple modular function on the right.  In this paper we consider families of identities related to \eqref{E:RR} whose automorphic properties have not been previously determined, and we identify the surprisingly simple automorphic forms that underlie the $q$-series.

Before we begin to describe our results, we note that it is helpful to understand the Rogers-Ramanujan identities as combinatorial identities for integer partitions with gap or congruential conditions.  If $\lambda$ is a partition of $n$, then $\lambda$ consists of parts $\lambda_1 \geq \dots \geq \lambda_k \geq 1$ that sum to $n$; in this case we write $\lambda \vdash n$ (see the reference \cite{And98} for additional standard notation and terminology).  In particular, let $B_{1}(n)$ denote the number of partitions of $n$ such that each pair of parts differs by at least $2$, and let $C_{1}(n)$ count the number of such partitions where the smallest part is also at least $2$. Furthermore, if $d \geq 3$ and $1 \leq r \leq \frac{d}{2}$, then we let $D_{d,r}(n)$ denote the number of partitions of $n$ into parts congruent to $\pm r \pmod{d}$.  The Rogers-Ramanujan identities \eqref{E:RR} are then equivalent to the combinatorial statements that
\begin{equation*}
B_1(n) = D_{5,1}(n) \qquad \text{and} \qquad C_1(n) = D_{5,2}(n).
\end{equation*}

Following Rogers-Ramanujan, the next major development in the subject was due to Schur \cite{Sch26}, who proved a similar identity for partitions with parts differing by at least $3$.   In fact, Gleissberg \cite{Gle28} extended Schur's result to a general modulus, which we state in full below. Let $B_{d,r}(n)$ denote the number of partitions of $n$ such that each part is congruent to $0, \pm r \pmod{d}$, each pair of parts differs by at least $d$, and if $d \mid \lambda_i$, then $\lambda_i - \lambda_{i+1} > d.$  We denote the generating function by
\begin{equation}
\label{E:Bdrq}
\sB_{d,r}(q) := \sum_{n \geq 0} B_{d,r}(n) q^n.
\end{equation}
Furthermore, let $\sE_{d,r}(q)$ denote the generating function for partitions into distinct parts that are congruent to $\pm r \pmod{d}$, with enumeration function $E_{d,r}(n)$, so that
\begin{equation}
\label{E:Edrq}
\sE_{d,r}(q) := \sum_{n \geq 0} E_{d,r}(n) q^n = \prod_{n \geq 0} \left(1 + q^{r + dn}\right) \left(1 + q^{d-r + dn}\right) = \left(-q^r, -q^{d-r}; q^d\right)_\infty.
\end{equation}
Schur's general identity is then stated as follows.
\begin{theorem*}[\cite{Gle28,Sch26}]
If $d \geq 3$, $1 \leq r < \frac{d}{2}$, then
\begin{equation}
\label{E:Schur}
\sB_{d,r}(q) = \sE_{d,r}(q).
\end{equation}
\end{theorem*}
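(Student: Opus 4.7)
The plan is to prove Schur--Gleissberg via the classical method of refined generating functions and $q$-difference equations, due to Andrews. (An alternative bijective approach, due independently to Bressoud and Bessenrodt, decomposes each multiple of $d$ in a Schur partition into two parts of residues $\pm r$ modulo $d$; this is conceptually cleaner but requires care to preserve distinctness of the image parts.)

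The main steps are as follows. First, I would introduce a refined two-variable generating function
\[
f(x;q) := \sum_{\lambda} x^{\ell(\lambda)} q^{|\lambda|},
\]
where the sum is over partitions $\lambda$ satisfying the Schur--Gleissberg conditions and $\ell(\lambda)$ denotes an appropriate statistic (say the number of parts, or a refinement that also tracks the residue class of the smallest part modulo $d$). Second, derive a $q$-difference equation for $f$ by splitting the sum according to the residue modulo $d$ of the smallest part and using the gap conditions to describe the ``remainder'' partition after removal of that part. The crucial distinction here is that removing a part $\equiv \pm r\pmod d$ forces the new smallest part to exceed it by at least $d$, whereas removing a part divisible by $d$ forces the new smallest part to exceed it by at least $d+1$; this contrast produces an extra $q$-shift in one term of the recurrence. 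Third, verify that the candidate product $g(x;q) := (-xq^r,-xq^{d-r};q^d)_\infty$, which manifestly satisfies the functional equation
\[
g(x;q) = (1+xq^r)(1+xq^{d-r})\, g(xq^d;q), \qquad g(0;q)=1,
\]
can be matched against $f$ by showing that $f$ satisfies the same $q$-difference equation with the same boundary value. Uniqueness of the solution (following from iterating the recurrence and using $|q|<1$) then forces $f = g$, and setting $x=1$ recovers $\sB_{d,r}(q) = \sE_{d,r}(q)$.

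The main obstacle will be Step 2. Because the product
\[
\sE_{d,r}(q) = \prod_{n \geq 0}(1 + q^{r+dn})(1 + q^{d-r+dn})
\]
contains no factor of the form $1 + q^{dn}$ corresponding to multiples of $d$, the generating function $f$ must implicitly convert the presence of multiples of $d$ (as genuine parts of Schur partitions) into pairs of distinct parts of residues $r$ and $d-r$. This is precisely what the strict gap condition enables: an extra unit of separation after each multiple of $d$ leaves just enough room for the splitting $dm = (d(m-1)+r) + (d-r)$ to occur without creating collisions with neighboring parts. Making this exchange rigorous---whether through the bijective route or the analytic one---is the delicate technical core of the argument, and it highlights why Schur's original conditions are so finely calibrated to the two-factor product on the right-hand side.
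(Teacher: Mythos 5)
The central gap is in your Step 3. You propose to conclude by showing that the refined generating function $f(x;q)$ satisfies the same first-order equation $g(x)=(1+xq^r)(1+xq^{d-r})\,g(xq^d)$, $g(0)=1$, as the product $(-xq^r,-xq^{d-r};q^d)_\infty$. If $x$ marks the number of parts, this is simply false: the smallest-part decomposition you describe in Step 2 produces the \emph{second-order} equation of Proposition \ref{P:frec}, namely $f(x)=(1+xq^r+xq^{d-r})f(xq^d)+xq^d(1-xq^d)f(xq^{2d})$, and the product does not satisfy that equation (substituting $g(x)=(1+xq^r)(1+xq^{d-r})g(xq^d)$ into it forces $x(1+xq^{d+r})(1+xq^{2d-r})=1-xq^d$, which fails). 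Indeed the two-variable identity $f(x;q)=(-xq^r,-xq^{d-r};q^d)_\infty$ is already wrong at the coefficient of $q^d$: the Schur side contributes $xq^d$ (the single part $d$; the pair $r,(d-r)$ is excluded by the gap condition), while the product contributes $x^2q^d$. Equality holds only at $x=1$. If instead you weight parts divisible by $d$ twice (the Alladi--Gordon/Bressoud refinement \cite{AG93,Bre80}), the two-variable identity becomes true, but then establishing the first-order functional equation for $f$ directly from the gap conditions \emph{is} the theorem --- it encodes precisely the splitting of multiples of $d$ into pairs of residues $\pm r$ --- and your plan explicitly defers this ("the delicate technical core") without supplying an argument. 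So as written, nothing beyond the recurrence of Proposition \ref{P:frec} is actually proved.

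For comparison, the paper closes exactly this gap analytically: keeping $x$ as the number of parts, it solves the second-order equation explicitly (Proposition \ref{P:fqseries}), giving $f_{d,r}(x;q)=(x;q^d)_\infty\sum_{n\geq 0}x^n\frac{(-q^r,-q^{d-r};q^d)_n}{(q^d;q^d)_n}$, and then applies the ${}_3\phi_2$ transformation \eqref{E:3phi2} to obtain \eqref{E:gdrhyper2}; after multiplying by $(x;q^d)_\infty$ and setting $x=1$, the factor $(x;q^d)_n$ annihilates every term with $n\geq 1$, leaving $(-q^r,-q^{d-r};q^d)_\infty=\sE_{d,r}(q)$, which is \eqref{E:Schur}. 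Some ingredient of this strength --- a genuine basic hypergeometric transformation, or a bijection realizing the splitting of multiples of $d$ --- is unavoidable; a uniqueness-of-solution argument alone cannot substitute for it, because in the refinement you propose the two sides do not satisfy the same $q$-difference equation, and in the corrected refinement the needed equation is equivalent to what is to be shown.
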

\begin{remarks}
{\it 1.} The case $d = 3$ and $r=1$ is the most often cited case of Schur's identities, as it is easily seen that $E_{3,1}(n) = D_{6,1}(n)$, and the resulting restatement of the theorem, the identity $B_{3,1}(n) = D_{6,1}(n)$, is directly analogous to the case $r=1$ of \eqref{E:RR}.  Indeed, the first Rogers-Ramanujan identity may be viewed as a degenerate case of Schur's Theorem, stating $B_{1,1}(n) = D_{5,1}(n)$.

{\it 2.} Euler's Theorem states that the number of partitions of $n$ into distinct parts is equal to those with odd parts (Cor. 1.2 in \cite{And98}).  This can also be viewed as a degenerate Schur-type result, as it states that the partitions of $n$ with gaps of at least $1$ are equinumerous to $D_{4,1}(n)$.

{\it 3.} One may assume in Schur's Theorem that the greatest common divisor is $(d,r)=1$, as if $(d,r) = g > 1$, then the statement reduces to the smaller case of $d' = \frac{d}{g}$ and $r' = \frac{r}{g}.$

{\it 4.} Bressoud \cite{Bre80} and Alladi and Gordon \cite{AG93} later gave bijective proofs of Schur's identities that also provide additional combinatorial information regarding the distribution of parts.
\end{remarks}

Schur's family of identities relates partitions with gap conditions (with generating functions $\sB_{d,r}(q)$) to infinite products that are essentially modular forms (as in \eqref{E:Bdrq}), and can in particular be expressed as a simple quotient of theta functions (this will be made precise later in the paper).  In contrast, in \cite{And68} Andrews considered a partition function related to the second of the Rogers-Ramanujan identities, which resulted in a $q$-series with more exotic automorphic behavior.

In general, we let $C_{d,r}(n)$ count the number of partitions enumerated by $B_{d,r}(n)$ that also satisfy the additional restriction that the smallest part is larger than $d$, and define the corresponding generating function as
\begin{equation*}
\sC_{d,r}(q) := \sum_{n \geq 0} C_{d,r}(n) q^n.
\end{equation*}
Andrews provided the following evaluation of the generating function for these ``Schur-type'' partitions for the case of $d=3$, $r=1$.
\begin{theorem*}[Andrews \cite{And68}]
We have that
\begin{equation}
\label{E:AndrewsC3}
\sC_{3,1}(q) = \frac{(-q;q)_\infty}{\left(q^6; q^6\right)_\infty}
\sum_{n \geq 0} \frac{(-1)^n q^{\frac{9n(n+1)}{2}}(1-q^{6n+3})}{(1+q^{3n+1})(1+q^{3n+2})}.
\end{equation}
\end{theorem*}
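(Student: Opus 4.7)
The plan is to establish the identity via a $q$-difference equation satisfied by an auxiliary family of generating functions, in the style of Andrews' original work \cite{And68}. I would define, for $m \geq 0$, the generating function $H_m(q)$ counting Schur-type partitions (satisfying the gap and congruence conditions underlying $B_{3,1}$) whose smallest part is strictly greater than $3m$, so that $H_0(q) = \sB_{3,1}(q)$ and $H_1(q) = \sC_{3,1}(q)$. Separating the smallest part of a partition counted by $H_m$ according to whether it equals $3m+1$, $3m+2$, or $3m+3$, and honoring the extra restriction that the gap above a multiple of $3$ is strictly greater than $3$, I obtain a recursion expressing $H_m$ as a combination of $H_{m+1}$ and $H_{m+2}$ with explicit monomial $q$-weights tracking the size of the removed part.

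Next, I would iterate this recursion starting from $m=1$. Telescoping accumulates the minimum possible contributions from the peeled smallest parts $(3m+1),(3m+4),(3m+7),\dots$, which sum up to give the quadratic exponent $9n(n+1)/2$ at the $n$-th stage. Combining the two branches of the recursion (the $3m+1$/$3m+2$ branch versus the $3m+3$ branch) should produce the denominator factors $(1+q^{3n+1})(1+q^{3n+2})$ and the numerator $1 - q^{6n+3}$, with signs alternating as the two branches combine. Once the recursion is pushed to infinity, the remaining unrestricted product collapses to $(-q;q)_\infty/(q^6;q^6)_\infty$ after standard infinite-product manipulations such as $(-q;q)_\infty = (q^2;q^2)_\infty/(q;q)_\infty$ and Jacobi triple product rearrangement.

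An alternative or complementary approach would be to run Schur's recurrence for the refined generating function $\sB_{3,1}(z;q)$ in which $z$ marks the number of parts, derive a hypergeometric representation, and then restrict to partitions with smallest part exceeding $3$ by subtracting the appropriate truncations indexed by the smallest residue class that appears. A Bailey-pair style transformation should then convert the resulting Eulerian sum into the Hecke-type form on the right-hand side of \eqref{E:AndrewsC3}.

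The main obstacle I anticipate is the precise bookkeeping needed to produce the symmetric numerator $1-q^{6n+3}$ and the alternating sign $(-1)^n$. These arise from nontrivial cancellations between the two branches of the recursion, and a naive substitution typically produces factors of $2$ or the wrong signs. A careful inductive verification — starting with the simpler case of $\sB_{3,1}(q)$, where Schur's theorem provides a clean target product for comparison — would be required, along with close attention to how the minimum-part contributions align at each level of the iteration.
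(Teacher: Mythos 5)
The combinatorial opening of your plan is sound: your $H_m(q)$ is exactly $f_{3,1}(q^{3m};q)$ in the notation of the paper, and the recursion you describe is the $d=3$, $r=1$ case of Proposition \ref{P:frec} with $x=q^{3m}$, proved just as you say by conditioning on the smallest part and subtracting the one disallowed configuration. The genuine gap is the passage from this recursion to the right-hand side of \eqref{E:AndrewsC3}, which is the entire analytic content of the theorem and which your sketch does not supply. The telescoping mechanism you propose does not produce the stated data: peeling the minimal admissible parts $4,7,10,\dots,3n+1$ above $m=1$ gives exponent $\tfrac{n(3n+5)}{2}$, not $\tfrac{9n(n+1)}{2}$ (already at $n=1$ one gets $4$ versus $9$), and iterating the two-branch recursion yields a sum over all branch sequences rather than a single clean series; you identify no cancellation scheme that collapses it, and there is no ``remaining unrestricted product'' in such an iteration from which $(-q;q)_\infty/(q^6;q^6)_\infty$ could emerge. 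In fact both the quadratic exponent $\tfrac{9n(n+1)}{2}$ (the $d=3$ case of $\tfrac{3dn(n+1)}{2}$) and the infinite-product prefactor come out of a hypergeometric transformation, not out of greedy accumulation of smallest parts.

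The workable route — which your ``alternative'' paragraph gestures at but does not execute — is the one the paper follows for general $d,r$. First solve the $q$-difference equation as a power series in $x$: renormalize $g_{3,1}(x)=f_{3,1}(x)/(x;q^3)_\infty$, compare coefficients of $x^n$, and obtain $f_{3,1}(x;q)=(x;q^3)_\infty\sum_{n\ge 0}x^n\frac{(-q,-q^2;q^3)_n}{(q^3;q^3)_n}$ (Proposition \ref{P:fqseries}); note that merely iterating the recursion on $H_m$ is not equivalent to this step. Then apply the limiting Watson--Whipple transformation \eqref{E:WW} with $q\mapsto q^3$, $a=x$, $b,c\to\infty$, $d=-q$, $e=-q^2$, multiply by $(x;q^3)_\infty$, and set $x=q^3$, using $\sC_{3,1}(q)=f_{3,1}(q^3;q)$; a short partial-fraction rearrangement then yields \eqref{E:Cdrsubs} at $d=3$, $r=1$, which is \eqref{E:AndrewsC3} once one checks $(-q,-q^2;q^3)_\infty(-q^3;q^3)_\infty=(-q;q)_\infty$ and $(q^6;q^6)_\infty=(q^3;q^3)_\infty(-q^3;q^3)_\infty$. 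Without naming and carrying out a specific transformation of this type (or an explicit Bailey-pair computation playing the same role), your proposal does not reach the Hecke-type sum, and the anticipated ``careful inductive verification'' has no concrete target to verify against.
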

\noindent After stating this result, Andrews then commented that
\begin{quote}
{\it \dots the generating function for $C_{3,1}(n)$ is similar to the mock theta functions.  Indeed, it is conceivable that a very accurate asymptotic formula for $C_{3,1}(n)$ may be found\dots.}
\end{quote}

We not only answer Andrews' claim about $\sC_{3,1}(q)$, but achieve much more - we fully describe the automorphic properties of all the ``Schur-type'' generating functions.  This description is only possible due to Zwegers' groundbreaking thesis \cite{Zw02}, where he thoroughly described how Ramanujan's famous mock theta functions (see \cite{Wat36}) fit into the modern framework of real-analytic automorphic forms as developed in \cite{BruF04}.
We adopt the terminology from \cite{DMZ12} and \cite{Zag09} when discussing $q$-series that have automorphic properties similar to Ramanujan's mock theta functions (see Section \ref{S:Gen:Auto} for definitions).  In particular, a {\it (weak) mixed mock modular form} is a function that lies in the tensor product of the general spaces of mock modular forms and (weakly holomorphic) modular forms, possibly multiplied with an additional rational multiple of $q$.  The following result shows that the Schur-type generating functions are examples of such forms (see \cite{ARZ13,BM11,BO09,CDH13,DMZ12}
for many other applications of mixed mock modular forms).

\begin{theorem}
\label{T:mixedmock}
If $d \geq 3$, $1 \leq r < \frac{d}{2}$, then $\sC_{d,r}(q)$ is a mixed mock modular form.
\end{theorem}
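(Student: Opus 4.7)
The plan is to derive an explicit $q$-hypergeometric formula for $\sC_{d,r}(q)$ that generalizes Andrews' formula~\eqref{E:AndrewsC3} uniformly in $(d,r)$, and then to recognize the resulting expression as a product of a modular theta quotient and a specialization of Hickerson's universal mock theta function $g(x,q)$. By the definition of a mixed mock modular form recalled in the introduction, this immediately gives the conclusion of Theorem~\ref{T:mixedmock}.

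First I would obtain a manageable $q$-series representation for $\sC_{d,r}(q)$. One natural route is to work with the difference $\sB_{d,r}(q) - \sC_{d,r}(q)$: it enumerates Schur partitions whose smallest part lies in $\{r,\,d-r,\,d\}$, the only residue classes $\equiv 0,\pm r \pmod{d}$ that are at most $d$. Using Schur's identity~\eqref{E:Schur} to evaluate $\sB_{d,r}(q)$ and stripping off the smallest part (together with the forced gap of at least $d$, or of strictly more than $d$ in the multiple-of-$d$ case) expresses each residue contribution as an infinite product times a $q$-series in which a fixed minimal part has been imposed. An alternative route, closer in spirit to Andrews' own argument, is to expand the generating function as a double sum indexed by the number of parts and their excess over the minimal staircase, and then to collapse it to a single sum via a Bailey-pair identity of the type used in the refinements of Schur's theorem by Bressoud and Alladi-Gordon cited above.

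Once an analog of~\eqref{E:AndrewsC3} is in hand, the next step is to recognize the resulting single sum as a linear combination of specializations of Hickerson's universal mock theta function at roots of unity and $q$-powers determined by $(d,r)$. In Andrews' formula, the Appell-Lerch shape of the denominators $(1+q^{3n+1})(1+q^{3n+2})$ together with the quadratic exponent $q^{9n(n+1)/2}$ in the numerator already match the defining structure of $g(x,q)$; a partial-fraction decomposition with respect to $1+q^{3n+1}$ and $1+q^{3n+2}$ should split the sum into two Appell-Lerch pieces. For general $(d,r)$ the analogous manipulation should produce Appell-Lerch sums indexed by the residues $r$ and $d-r$, each of which becomes mock modular of weight $\frac{1}{2}$ after the standard non-holomorphic completion of Zwegers \cite{Zw02}. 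The accompanying prefactor is a quotient of infinite products in $q^d$, hence (up to a rational power of $q$) a weakly holomorphic modular form, and the product of the modular prefactor with the mock modular single sum is a mixed mock modular form by definition.

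The main obstacle is the first step: producing a clean closed-form analog of~\eqref{E:AndrewsC3} uniformly in $(d,r)$. Andrews' original derivation is a delicate Bailey-type manipulation engineered for $(d,r)=(3,1)$, and it is not a priori clear which Bailey pair, or which choice of base and specialization parameters in an iterated Heine or Watson transformation, yields the single-sum representation across all admissible pairs. Additional care will be required to track how the exponents and roots of unity deform with $d$ and $r$ so that the outcome still has exactly Appell-Lerch shape. Once that formula is secured, the identification with $g$ and the invocation of Zwegers' completion should be essentially mechanical, and the mixed mock modular conclusion follows immediately from the product structure.
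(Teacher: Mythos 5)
Your overall architecture coincides with the paper's: express $\sC_{d,r}(q)$ as a modular infinite product times a specialization of the universal mock theta function, then quote mock modularity of that specialization and conclude by the definition of mixed mock modular forms. The problem is that the step you yourself flag as the ``main obstacle'' --- a closed-form analogue of \eqref{E:AndrewsC3} uniform in $(d,r)$ --- is the entire mathematical content of the theorem, and neither of your proposed routes, as described, produces it. Stripping the smallest part from the partitions counted by the difference $\sB_{d,r}(q)-\sC_{d,r}(q)$ does not close up: after removing the smallest part, the gap condition becomes a lower bound on the new smallest part that depends on what was removed, so you are forced into a two-variable generating function carrying that bound as a parameter. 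That is exactly what the paper does: it introduces $f_{d,r}(x;q)$, derives the $q$-difference equation of Proposition \ref{P:frec} combinatorially, solves it in hypergeometric form (Proposition \ref{P:fqseries}), and then --- answering your ``which transformation?'' question concretely --- applies the limiting Watson--Whipple transformation \eqref{E:WW} and the ${}_3\phi_2$ transformation \eqref{E:3phi2} of \cite{GR90} with $q\mapsto q^d$, $a=x$, $b,c\to\infty$, and the remaining two parameters set to $-q^r$ and $-q^{d-r}$, finally specializing $x=q^d$ since $\sC_{d,r}(q)=f_{d,r}(q^d;q)$ by \eqref{E:BC=f}. This yields Proposition \ref{P:Chyper}, namely $\sC_{d,r}(q)=\left(-q^{r},-q^{d-r};q^d\right)_{\infty}g_{3}\left(-q^r;q^d\right)$, with no bespoke Bailey-pair machinery. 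Until some such derivation is supplied, your argument has no input to feed into the modularity step, so as it stands the proposal is a plausible program rather than a proof.

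Two smaller points about the back end. You do not need to split the single sum into two Appell--Lerch pieces by partial fractions and run Zwegers' completion by hand: once the identity above is in place, the ${}_3\phi_2$ form is $g_3(-q^r;q^d)$ on the nose, and its mock modularity of weight $\frac{1}{2}$ (up to rational $q$-powers) is Theorem 3.1 of \cite{Kan09}, applied with $\alpha=\frac{1}{2}+r\tau$ (note $\alpha\notin\Z\tau+\frac{1}{3}\Z$) and $\tau\mapsto d\tau$; your completion route would also work but essentially re-proves Kang's result. Finally, the prefactor is not just ``a quotient of infinite products in $q^d$'': it equals $\sB_{d,r}(q)$, which by Schur's identity \eqref{E:Schur} and Proposition \ref{P:Bmod} is the theta--eta quotient $-q^{-\frac{d}{12}+\frac{r}{2}}\vartheta\left(\frac{1}{2}+r\tau;d\tau\right)/\eta(d\tau)$, and it is this identification (Theorem \ref{T:univ}) that makes the modularity of the first factor, and hence the mixed mock modularity of $\sC_{d,r}(q)$, immediate.
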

\begin{remark*}
Similarly, it was already well-known to experts that $\sB_{d,r}(q)$ is a modular function (up to a $q$-power); this is apparent from combining \eqref{E:Edrq} and \eqref{E:Schur} with the general theory of theta functions and modular units (cf. Proposition \ref{P:Bmod}).
\end{remark*}

In fact, we can precisely describe the automorphic functions that arise in Theorem \ref{T:mixedmock} in terms of fundamental representatives from the spaces of modular and mock modular forms.
In order to do so, we recall Hickerson's {\it universal mock theta function} (of odd order), which is defined by
\begin{equation}
\label{E:Univg3}
g_3(x;q) := \sum_{n \geq 0} \frac{q^{n(n+1)}}{(x; q)_{n+1} (x^{-1}q; q)_{n+1}}.
\end{equation}
Although the correct reference is often misattributed, we hope to make clear that Hickerson was the first to recognize the importance of this function.  Indeed, in \cite{Hic88} he showed that Ramanujan and Watson's examples can be decomposed into expressions in terms of $g_3,$ and thereby proved the so-called ``mock theta conjectures'' by studying the identities satisfied by the universal function.  As we will see later, the sum in the expression $\eqref{E:AndrewsC3}$ is also essentially similar to the Appell-Lerch functions used in Zwegers' study of the mock theta functions \cite{Zw02}; we prove later that \eqref{E:Univg3} can also be expressed in terms of such sums.  The present notation was introduced by Gordon and McIntosh (see their recent survey \cite{GM12}), who also found a second universal mock theta function of ``even order'', which they denoted by $g_2(x;q)$.

In the following theorem statement we adopt standard notation for modular forms and theta functions; the definitions of $\eta(\tau)$ and $\vartheta(w;\tau)$ are reviewed in Section \ref{S:Gen:Auto} (specifically, see \eqref{E:eta} and \eqref{Thetadef}).  Here and throughout the paper we let $q:=e^{2\pi i \tau}$ be the standard uniformizer for the cusp $i\infty$, where $\tau$ is in the complex upper-half plane $\mathbb{H}$.
\begin{theorem}
\label{T:univ}
If $d \geq 3$, $1 \leq r < \frac{d}{2}$, then
\begin{equation*}
\sC_{d,r}(q) = -q^{-\frac{d}{12} +\frac{r}{2}} \frac{\vartheta(\frac{1}{2} + r\tau; d\tau)}{\eta(d\tau)}
g_3\left(-q^r; q^d\right).
\end{equation*}
\end{theorem}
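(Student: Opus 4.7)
The plan is to first strip off the theta/eta prefactor via Jacobi triple product, reducing Theorem~\ref{T:univ} to a clean $q$-series identity between $\sC_{d,r}(q)$ and $g_3(-q^r;q^d)$, and then to prove that identity by establishing a hypergeometric ``Eulerian'' expansion of $\sC_{d,r}(q)$.

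\emph{Step 1: Simplifying the prefactor.} Applying the Jacobi triple product to $\vartheta(\frac{1}{2}+r\tau;d\tau)$ with $\zeta = e^{2\pi i(1/2+r\tau)} = -q^r$ and modular parameter $q^d$ gives
\[ \vartheta\!\left(\tfrac{1}{2}+r\tau;\,d\tau\right) \;=\; -q^{\frac{d}{8}-\frac{r}{2}}(q^d;q^d)_\infty(-q^r;q^d)_\infty(-q^{d-r};q^d)_\infty. \]
Dividing by $\eta(d\tau) = q^{d/24}(q^d;q^d)_\infty$ and combining with the overall factor $-q^{-d/12+r/2}$ cancels all fractional $q$-powers and reduces Theorem~\ref{T:univ} to the cleaner identity
\[ \sC_{d,r}(q) \;=\; \sE_{d,r}(q)\cdot g_3(-q^r;\,q^d). \qquad (\star) \]

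\emph{Step 2: Hypergeometric expansion of $\sC_{d,r}$.} Substituting the definition \eqref{E:Univg3} into $(\star)$ (with $x=-q^r$ and $q\to q^d$, so that $x^{-1}q = -q^{d-r}$) and absorbing the product $\sE_{d,r}(q) = (-q^r,-q^{d-r};q^d)_\infty$ into each summand via the factorization $(-q^r;q^d)_\infty = (-q^r;q^d)_{n+1}\,(-q^{r+d(n+1)};q^d)_\infty$ (and the analogue for the $d-r$ class), the identity $(\star)$ becomes equivalent to
\[ \sC_{d,r}(q) \;=\; \sum_{n\geq 0} q^{dn(n+1)}\bigl(-q^{r+d(n+1)};q^d\bigr)_\infty\bigl(-q^{d-r+d(n+1)};q^d\bigr)_\infty. \qquad (\star\star) \]
To prove $(\star\star)$ I will extend Andrews' approach from \cite{And68}. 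First, the trivial bijection ``subtract $d$ from every part'' identifies $\sC_{d,r}(q)$ with the specialization $\sB_{d,r}(q^d;q)$ of the bivariate generating function $\sB_{d,r}(z;q) := \sum_{n,k\geq 0}B_{d,r}(n,k)\, z^k q^n$, where $k$ marks the number of parts. I will then derive a $q$-difference equation for $\sB_{d,r}(z;q)$ by conditioning on the smallest part of a Schur partition, iterate it, and extract $(\star\star)$ after specializing to $z=q^d$. An equivalent combinatorial route uses the Alladi--Gordon bijection \cite{AG93}, which decomposes a Schur partition into a pair of distinct partitions (one with parts $\equiv r$ and one with parts $\equiv d-r\pmod d$) together with a set of ``matched pairs'' that fuse into parts $\equiv 0\pmod d$; stratifying $C_{d,r}$-partitions by the number $n$ of such matched pairs, computing the minimal contribution $dn(n+1)$ coming from the resulting staircase $2d,\,4d,\dots,2nd$ of multiples of $d$, and accounting for the ``smallest part $>d$'' shift on the distinct-parts components then yields $(\star\star)$ directly.

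\emph{Main obstacle.} Step~2, the derivation of $(\star\star)$, will be the crux of the argument. Andrews' original treatment of $d=3$, $r=1$ in \cite{And68} already required delicate $q$-series manipulations, and the uniform generalization must simultaneously handle all three residue classes $0,\pm r\pmod d$, respect both the strengthened gap condition ``$d\mid\lambda_i\Rightarrow\lambda_i-\lambda_{i+1}>d$'' and the ``smallest part $>d$'' restriction (which shifts the thresholds on the distinct-parts components), and specialize consistently at $d=3$, $r=1$ to recover \eqref{E:AndrewsC3}. Step~1 and the algebraic identification in Step~2 are otherwise purely formal consequences of the Jacobi triple product and the definition \eqref{E:Univg3} of $g_3$.
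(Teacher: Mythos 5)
Your Step 1 is correct and matches what the paper does through Proposition \ref{P:Bmod}: the product form \eqref{E:thetaprod} turns the theta/eta prefactor into $\sE_{d,r}(q)$, so the theorem is equivalent to $\sC_{d,r}(q)=\sE_{d,r}(q)\,g_3(-q^r;q^d)$, i.e. to your $(\star\star)$. The genuine gap is that $(\star\star)$ --- which you yourself flag as the crux --- is never proved; you only name two candidate strategies. In the paper this is exactly the content of Proposition \ref{P:Chyper}: one derives the $q$-difference equation of Proposition \ref{P:frec} (your bivariate recurrence; note $\sC_{d,r}(q)=f_{d,r}(q^d;q)$ is already \eqref{E:BC=f}), solves it to get the Eulerian series of Proposition \ref{P:fqseries}, and then --- this is the decisive step --- applies the ${}_3\phi_2$ transformation \eqref{E:3phi2} (equivalently the limiting Watson--Whipple transformation \eqref{E:WW}) to convert $\left(x;q^d\right)_\infty\sum_{n\geq 0}x^n\left(-q^r,-q^{d-r};q^d\right)_n/\left(q^d;q^d\right)_n$ into \eqref{E:gdrhyper2}, whose specialization at $x=q^d$ is precisely $(\star\star)$. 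Your first route, ``derive the difference equation, iterate it, and extract $(\star\star)$,'' elides exactly this: iterating Proposition \ref{P:frec} naturally produces the Eulerian expansion, not $(\star\star)$, and bridging the two is a nontrivial well-poised transformation that must either be quoted (as the paper does from Gasper--Rahman, following Andrews' own use of such transformations for $d=3$) or proved, e.g. by checking that the right-hand side of \eqref{E:gdrhyper2} satisfies the same $q$-difference equation and boundary condition.

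Your second, bijective route is likewise only a heuristic as written. Even granting a stratification of $C_{d,r}$-partitions by a parameter $n$ whose minimal configuration is the staircase $2d,4d,\dots,2nd$ of weight $dn(n+1)$, the remaining parts $\equiv\pm r\pmod d$ are not unconstrained: they carry gap conditions relative to the multiples of $d$ they interleave and to each other, with thresholds that move as the staircase is perturbed, plus the smallest-part-$>d$ restriction. Showing that this constrained configuration space has generating function exactly $\left(-q^{r+d(n+1)},-q^{d-r+d(n+1)};q^d\right)_\infty$ is essentially a refined Schur theorem and requires a genuine Bressoud/Alladi--Gordon-type argument that you do not supply. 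So the formal reduction in Step 1 stands, but as it stands the proposal restates the theorem in the equivalent form $(\star\star)$ without proving it; supplying either the hypergeometric transformation or the full bijection is the missing core of the proof.
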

\noindent This result gives a resounding affirmative answer to Andrews' original speculation regarding the relationship between $\sC_{3,1}(q)$ and mock theta functions, as it precisely describes the generating functions as simple mixed mock modular forms.

\begin{remark*}
We will see in the proof that Theorem \ref{T:univ} is also equivalent to the identity
\begin{equation}
\label{E:C=B}
\sC_{d,r}(q) = \sB_{d,r}(q) g_3\left(-q^r; q^d\right).
\end{equation}
Strikingly, this means that the universal mock theta function $g_3(-q^r; q^d)$ plays the role of a combinatorial ``correction factor'' that precisely accounts for the difference in the enumeration functions $B_{d,r}$ and $C_{d,r}$.
\end{remark*}

\begin{remark*}
In Section 8 of Andrews' seminal work on Durfee symbols \cite{And07}, he introduced another combinatorial generating function that is closely related to the universal mock theta function.  In particular, he defined ``odd Durfee symbols'' (which generalize partitions with certain parity conditions) and showed that the two-parameter generating function is
\begin{equation*}
R_{1}^{0}(x;q) := \sum_{n \geq 0} \frac{q^{2n(n+1)+1}}{(xq; q^2)_{n+1} (x^{-1}q; q^2)_{n+1}}.
\end{equation*}
Using \eqref{E:Univg3} and Theorem \ref{T:univ}, we therefore find another notable combinatorial relationship for the Schur-type enumeration functions, namely
\begin{equation*}
\sC_{d,r}(q) = -q^{-\frac{d}{12} +\frac{r-d}{2}} \frac{\vartheta(\frac{1}{2} + r\tau; d\tau)}{\eta(d\tau)}
R_0^1\left(-q^{r-\frac{d}{2}}; q^{\frac{d}{2}}\right).
\end{equation*}
As in \eqref{E:C=B}, we see that the odd Durfee symbol generating function (and an additional $q$-power) is similarly the correction factor between the two Schur-type functions.
\end{remark*}

We also answer the second part of Andrews' grand statement by providing asymptotic formulas for all of the Schur-type partition functions.
\begin{theorem}
\label{T:BCAsymp}
Suppose that $d \geq 3$ and $1 \leq r < \frac{d}{2}$.
As $n \to \infty$
\begin{align*}
B_{d,r}(n) & \sim \frac{1}{2^{\frac{5}{4}}3^{\frac{1}{4}} d^{\frac{1}{4}} n^{\frac{3}{4}}} e^{\pi \sqrt{\frac{2n}{3d}}}, \\
C_{d,r}(n) & \sim \frac{1}{3}\cdot B_{d,r}(n).
\end{align*}
\end{theorem}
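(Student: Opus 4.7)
Both asymptotics will be established via Wright's Circle Method, starting from the Cauchy integrals
\[
B_{d,r}(n) = \frac{1}{2\pi i}\oint \sB_{d,r}(q)\, q^{-n-1}\, dq, \qquad C_{d,r}(n) = \frac{1}{2\pi i}\oint \sC_{d,r}(q)\, q^{-n-1}\, dq,
\]
taken around the circle $|q|=e^{-t_n}$ with $t_n$ chosen so that the dominant singularity at $q=1$ sits at the saddle. The contour will be split into a short major arc about $q=1$, on which the leading asymptotic of the integrand is plugged in, and a minor-arc complement, on which I will show the integrand is exponentially smaller. A Laplace/saddle-point evaluation of the major arc then yields the claimed asymptotics.

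For $B_{d,r}(n)$, the key input is the behavior of $\sB_{d,r}(e^{-t})$ as $t\to 0^{+}$. By Schur's theorem, $\sB_{d,r}(q)=\sE_{d,r}(q)$, and applying the Jacobi triple product to \eqref{E:Edrq} produces
\[
\sB_{d,r}(q) = \frac{1}{(q^d;q^d)_\infty}\sum_{n\in\Z} q^{\frac{d}{2}n(n-1)+rn}.
\]
The classical $\eta$-transformation gives $(e^{-dt};e^{-dt})_\infty\sim\sqrt{2\pi/(dt)}\,e^{-\pi^2/(6dt)}$, and Poisson summation applied to the Gaussian theta series yields $\sum_n e^{-t(\frac{d}{2}n(n-1)+rn)}\sim\sqrt{2\pi/(dt)}$ (the extra factor $e^{t(r-d/2)^2/(2d)}$ from completing the square tends to $1$). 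Consequently
\[
\sB_{d,r}\!\left(e^{-t}\right) \sim e^{\pi^2/(6dt)}\qquad(t\to 0^{+}).
\]
Inserting this into the standard Wright saddle-point formula with $A=\pi^2/(6d)$ and exponent $\beta=0$ gives
\[
B_{d,r}(n) \sim \frac{A^{1/4}}{2\sqrt{\pi}\,n^{3/4}}\,e^{2\sqrt{An}} = \frac{1}{2^{5/4}\,3^{1/4}\,d^{1/4}\,n^{3/4}}\,e^{\pi\sqrt{2n/(3d)}},
\]
which is the first claim.

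For $C_{d,r}(n)$, I invoke the factorization $\sC_{d,r}(q)=\sB_{d,r}(q)\,g_3(-q^r;q^d)$ from \eqref{E:C=B}. It therefore suffices to identify the limit of the correction factor at $q=1$. Term by term in
\[
g_3(-q^r;q^d) = \sum_{n\ge 0}\frac{q^{dn(n+1)}}{(-q^r;q^d)_{n+1}(-q^{d-r};q^d)_{n+1}},
\]
each Pochhammer factor $1+q^{r+dj}$ and $1+q^{d-r+dj}$ tends to $2$ as $q\to 1^{-}$, so the $n$-th summand tends to $4^{-(n+1)}$ and the series tends to $\sum_{n\ge 0}4^{-(n+1)}=\tfrac13$. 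The super-exponential decay $|q|^{dn(n+1)}$ of the summands for $|q|=e^{-t_n}$ makes this convergence uniform on a sufficiently narrow sector about the positive real axis, so that $\sC_{d,r}(e^{-t})\sim\tfrac{1}{3}e^{\pi^2/(6dt)}$. Re-running the saddle-point analysis with the same parameters $A$ and $\beta$, and leading constant $\tfrac{1}{3}$, yields $C_{d,r}(n)\sim\tfrac{1}{3}B_{d,r}(n)$, as claimed.

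The principal technical obstacle is the minor-arc estimate needed to justify the saddle-point reduction. For $\sB_{d,r}$ this is routine: the theta-quotient description is modular up to a $q$-power, so that the only cusp producing the maximal exponential growth on the unit circle lies at $q=1$, and the contribution from the rest of the contour is exponentially smaller. For $\sC_{d,r}$ the additional factor $g_3(-q^r;q^d)$ is only mock modular, but a crude term-by-term estimate on its defining series shows that it grows at worst polynomially in $1/t$ uniformly on the contour, which is negligible against the exponential gap enjoyed by $\sB_{d,r}$ off the major arc; the same argument therefore pushes through.
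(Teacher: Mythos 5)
Your overall strategy is the same as the paper's (Wright's Circle Method, the factorization $\sC_{d,r}(q)=\sB_{d,r}(q)\,g_3(-q^r;q^d)$, and the limit $g_3\to\tfrac13$ at $q=1$), and your treatment of $B_{d,r}(n)$ is sound: the triple-product/Poisson route to $\sB_{d,r}(e^{-t})\sim e^{\pi^2/(6dt)}$ is a cosmetic variant of the paper's theta-quotient inversion, the constants check out, and the minor-arc saving for $\sB_{d,r}$ does come from the modular inversion you invoke. (Your major-arc claim that $g_3\to\tfrac13$ uniformly is stated too glibly --- since $|q|=e^{-t_n}\to1$, the ``super-exponential decay'' only controls terms with $n\gg t_n^{-1/2}$, and one must also bound the Pochhammer denominators from below on the arc --- but this is repairable and the paper is similarly brief.)

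The genuine gap is your minor-arc estimate for $C_{d,r}(n)$. A ``crude term-by-term estimate on the defining series'' \eqref{E:Univg3} does \emph{not} show that $g_3(-q^r;q^d)$ is polynomially bounded on the circle $|q|=e^{-N}$, $N\asymp n^{-1/2}$: away from the positive real axis the only uniform lower bound for each factor is $|1+q^{r+dj}|\ge 1-e^{-N(r+dj)}$, whence
\begin{equation*}
\left|\left(-q^r;q^d\right)_{n+1}\left(-q^{d-r};q^d\right)_{n+1}\right|\gg N^2\left(e^{-Nd};e^{-Nd}\right)_\infty^2\gg N^2 e^{-\frac{\pi^2}{3dN}(1+o(1))},
\end{equation*}
so termwise one only obtains $|g_3(-q^r;q^d)|\ll N^{-3}e^{\frac{\pi^2}{3dN}}$. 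Combined even with the improved bound $\sB_{d,r}(q)\ll e^{\frac{\pi^2}{6d}\re(1/z)}\le e^{\frac{\pi^2}{30dN}}$ off the major arc, the exponent $\frac{\pi^2}{N}\left(\frac{1}{30d}+\frac{1}{3d}\right)$ exceeds the major-arc exponent $\frac{\pi^2}{6dN}$, so the saddle-point reduction is not justified; and polynomial boundedness of $g_3(-q^r;q^d)$ on the whole circle is in any case a delicate claim, since near roots of unity where some $q^{r+dj}$ approaches $-1$ the Eulerian denominators nearly vanish (precisely where $\sB_{d,r}$ is small, only the product $\sC_{d,r}$ being tame). The paper closes exactly this hole by first rewriting $\sC_{d,r}$ in the Appell--Lerch form of Proposition \ref{P:Chyper}, \eqref{P:Chyper:bi}: there each term of the bilateral sum $\sum_{n\in\Z}(-1)^n q^{3dn(n+1)/2}/(1+q^{r+dn})$ has a \emph{single} possibly small denominator, bounded below by $\gg N$, so the sum is $O(N^{-2})$ uniformly on the contour, while all exponential growth sits in the two modular products controlled by $e^{\frac{\pi^2}{6d}\re(1/z)}$ with $\re(1/z)\le\frac{1}{5N}$ on the minor arc. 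You need this representation (or some comparable device) to make your minor-arc step for $C_{d,r}(n)$ go through.
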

\begin{remark*}
In fact, we prove much more than this, as our analysis also allows us to describe further terms in the asymptotic expansion for these enumeration functions (see Theorem \ref{T:Ineq} for further applications).
Moreover, one could also use the extension of the Hardy-Ramanujan Circle Method that the authors developed in \cite{BM11} in order to find expressions for the coefficients with only polynomial error.
\end{remark*}

The results described thus far have followed the spirit of the Rogers-Ramanujan and Schur identities, in which simple enumeration functions are shown to be combinatorially equivalent, and whose generating functions lie in the intersection of hypergeometric $q$-series and automorphic forms.  However, there is also a notable body of research that was inspired by a negative approach to \eqref{E:RR} and \eqref{E:Schur}.  In \cite{Ald48} Alder proved a non-existence result for certain general identities analogous to those of Rogers-Ramanujan and Schur (also see \cite{Ald69}).  Moreover, Andrews observed in \cite{And71P} that
such identities often fail to hold because of an asymptotic inequality in which one enumeration function is eventually always larger than the other.

To precisely describe the cases considered by Alder and Andrews, let $q_{d,j}(n)$ denote the number of partitions of $n$ in which each pair of parts differs by at least $d$ and the smallest part is at least $j$, and let $Q_{d,j}(n)$ denote the number of partitions into parts congruent to $\pm j \pmod{d+3}.$  The Rogers-Ramanujan identities \eqref{E:RR} are then equivalently stated as
\begin{equation*}
q_{2,1}(n) = Q_{2,1}(n) \qquad \text{and} \qquad q_{2,2}(n) = Q_{2,2}(n).
\end{equation*}
Alder showed that if $d \geq 3$, then $q_{d,j}(n)$ is not equal to any partition enumeration function where the parts lie in some specified set of positive integers, taken without any restrictions on multiplicity or gaps.  Furthermore, he conjectured an inequality between $q_{d,j}(n)$ and $Q_{d,j}(n)$ that was proven in a weaker asymptotic version by Andrews \cite{And71P}.  The full Alder-Andrews Conjecture was recently confirmed across a series of papers by Yee \cite{Yee04,Yee08} and Alfes, Jameson, and Lemke Oliver \cite{AJL11}.
\begin{theorem*}
If $d \geq 3$ and $n \geq 2d + 9$, then
\begin{equation*}
q_{d,1}(n) > Q_{d,1}(n).
\end{equation*}
\end{theorem*}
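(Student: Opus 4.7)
The plan is to combine an asymptotic analysis via Wright's Circle Method (in the same spirit as the proof of Theorem \ref{T:BCAsymp}) for large parameters with explicit combinatorial verification for small ones. I would first establish the two generating functions: the Eulerian form
\begin{equation*}
F_d(q) := \sum_{n \ge 0} q_{d,1}(n) q^n = \sum_{k \ge 0} \frac{q^{k + d\binom{k}{2}}}{(q;q)_k},
\end{equation*}
obtained by fixing the number $k$ of parts and subtracting $d(i-1)$ from the $i$-th part, and the product form
\begin{equation*}
\sum_{n \ge 0} Q_{d,1}(n) q^n = \frac{1}{\(q, q^{d+2}; q^{d+3}\)_\infty}.
\end{equation*}

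Next, a saddle point analysis of $F_d$ at $q = e^{-\epsilon}$ with $\epsilon \to 0^+$ shows that the dominant contribution comes from terms with $k \sim y_0/\epsilon$, where $y_0 = y_0(d)$ is the unique positive root of $e^{-y} + e^{-dy} = 1$. This gives $\log F_d(q) \sim A(d)/\epsilon$ with
\begin{equation*}
A(d) := \frac{\pi^2}{6} - \frac{d\, y_0^2}{2} - \Li_2\(e^{-y_0}\).
\end{equation*}
Standard Wright-type estimates then yield $q_{d,1}(n) \sim c_1(d)\, n^{-3/4} \exp\(2\sqrt{A(d) n}\)$, while a Meinardus-type argument applied to the product gives $Q_{d,1}(n) \sim c_2(d)\, n^{-3/4} \exp\(2\pi\sqrt{n/(3(d+3))}\)$. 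The crucial numerical step is to verify
\begin{equation*}
A(d) > \frac{\pi^2}{3(d+3)} \qquad \text{for all } d \ge 3,
\end{equation*}
with equality exactly at the Rogers--Ramanujan degeneracy $d = 2$, where a classical dilogarithm evaluation gives $A(2) = \pi^2/15 = \pi^2/(3 \cdot 5)$. This inequality can be established by direct numerical evaluation at a window of small $d$ together with monotonicity estimates on $(d+3)A(d)$ for large $d$.

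The final step is to promote the asymptotic inequality to the explicit bound $n \ge 2d+9$. For $d$ above some explicit threshold $D_0$, the error terms in Wright's expansion can be made uniform in $d$ and dominated by the rate gap $A(d) - \pi^2/(3(d+3))$, yielding the inequality for every $n \ge 2d+9$ directly. For each remaining small $d \in \{3, 4, \dots, D_0-1\}$, I would combine a finite numerical verification of $q_{d,1}(n) > Q_{d,1}(n)$ for $n$ in a window $2d+9 \le n \le N_0(d)$ with a refined asymptotic carrying effective error bounds for $n \ge N_0(d)$. In the two lowest cases $d = 3$ and $d = 4$, special structure from the Andrews--Gordon identities provides near-product representations of $F_d(q)$ that make the comparison against the modular product for $Q_{d,1}$ amenable to direct coefficient-wise or logarithmic-derivative techniques.

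The main obstacle is keeping the error analysis in Wright's method uniform in $d$: the saddle location $y_0(d)$, the second-derivative term at the saddle, and the subexponential constants $c_i(d)$ all move with $d$, so ensuring that the Wright error is dominated by the rate gap $A(d) - \pi^2/(3(d+3))$ down to the explicit threshold $n = 2d+9$ requires delicate effective bookkeeping. A secondary difficulty is the near-degeneracy at $d = 3$, where the two exponential rates are close enough that the ratio of subexponential constants must be controlled precisely in order to secure the inequality at the very start of the claimed range.
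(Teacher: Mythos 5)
First, note that the paper does not prove this statement at all: it is the Alder--Andrews Conjecture, quoted with attribution to Andrews' asymptotic work \cite{And71P}, Yee's combinatorial proofs for large $d$ \cite{Yee04,Yee08}, and the completion by Alfes, Jameson, and Lemke Oliver \cite{AJL11}. So your proposal must be judged against that body of work rather than against anything in this paper. Your analytic set-up is sound as far as it goes: the Eulerian form $\sum_{k\geq 0} q^{k+d\binom{k}{2}}/(q;q)_k$, the product $1/\big(q,q^{d+2};q^{d+3}\big)_\infty$, the saddle equation $e^{-y_0}+e^{-dy_0}=1$, the rate $A(d)=\tfrac{\pi^2}{6}-\tfrac{d y_0^2}{2}-\Li_2\big(e^{-y_0}\big)$, and the check $A(2)=\pi^2/15$ are all correct, and comparing $A(d)$ with $\pi^2/(3(d+3))$ is exactly Andrews' asymptotic inequality. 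As a plan, ``effective asymptotics plus finite computation for bounded $d$'' is essentially the strategy of \cite{AJL11}.

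The genuine gap is the claim that, for $d$ above a threshold $D_0$, uniform-in-$d$ Wright error estimates dominated by the rate gap give the inequality \emph{for every} $n\geq 2d+9$. This cannot work: at $n$ of size $2d+9$ a partition counted by $q_{d,1}(n)$ has at most two or three parts and both counting functions are of bounded size, so no circle-method or saddle-point expansion (whose validity requires $n$ large compared with $d$, in practice $n\gg d^2$) is applicable there; the rate gap $A(d)-\pi^2/(3(d+3))$, which is of order $(\log d)^2/d$, controls only the regime $n\to\infty$ with $d$ fixed or growing slowly. This is precisely why the large-$d$ cases in the literature are handled by an explicit injection (Yee), not by analysis, and why the small-$n$ range down to the sharp threshold $2d+9$ requires a direct combinatorial argument valid for all $d$ simultaneously -- infinitely many cases that your scheme leaves to ``uniform bookkeeping'' without any mechanism to carry it out. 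A secondary error: for $d=3,4$ there is no Andrews--Gordon near-product representation of $\sum_k q^{k+d\binom{k}{2}}/(q;q)_k$; the Andrews--Gordon identities impose gap conditions of a different shape, and Alder's non-existence theorem (cited in this paper) is exactly the statement that for $d\geq 3$ no product over a set of congruence classes equals this series, so that step of your plan rests on identities that do not exist. Finally, even for the bounded range $3\leq d<D_0$ the substantive content -- explicit error constants in the asymptotic formulas and the computer verification window -- is asserted rather than supplied, which is where the actual difficulty of \cite{AJL11} lies.
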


However, although there are now a plethora of identities and non-identities that relate various pairs of the enumeration functions described within this paper, there are essentially no results comparing the enumeration functions in the same family as $d$ and/or $r$ vary.  Our next main result shows that for each fixed $d$, the families of enumeration functions $B_{d,r}$ and $C_{d,r}$ are asymptotically decreasing in $r$.

\begin{theorem}
\label{T:Ineq}
Suppose that $d \geq 3$ and $1 \leq r < \frac{d}{2}$.
\begin{enumerate}
\item
As $n \to \infty$ we have the asymptotic equalities
\begin{align*}
B_{d,r+1}(n) &\sim B_{d,r}(n), \\
C_{d,r+1}(n) &\sim C_{d,r}(n).
\end{align*}
\item
For sufficiently large $n$ we have the inequalities
\begin{align*}
B_{d,r+1}(n) &> B_{d,r}(n), \\
C_{d,r+1}(n) &> C_{d,r}(n).
\end{align*}
\end{enumerate}
\end{theorem}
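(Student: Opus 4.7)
Part (i) is immediate from Theorem \ref{T:BCAsymp}, since the leading asymptotic formulas displayed there for $B_{d,r}(n)$ and $C_{d,r}(n)$ are independent of $r$; consequently both ratios $B_{d,r+1}(n)/B_{d,r}(n)$ and $C_{d,r+1}(n)/C_{d,r}(n)$ tend to $1$. The content of the theorem is in part (ii), where we need an asymptotic expansion one order finer than the one used in Theorem \ref{T:BCAsymp}.

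My plan is to refine the Wright Circle Method argument underlying Theorem \ref{T:BCAsymp} by retaining one additional term in the expansion of the generating functions near $q=1$. Starting from Schur's identity $\sB_{d,r}(q) = (-q^r,-q^{d-r};q^d)_\infty$, a Mellin transform expansion based on $\int_0^\infty \log(1+e^{-u})\,u^{s-1}\,du = (1-2^{-s})\Gamma(s)\zeta(s+1)$, together with the Hurwitz zeta values $\zeta(0,r/d)=\tfrac{1}{2}-\tfrac{r}{d}$ and $\zeta(-1,r/d)=-\tfrac{1}{2} B_2(r/d)$, produces an expansion
\begin{equation*}
\log \sB_{d,r}(e^{-t}) \;=\; \frac{\pi^2}{6dt} \;+\; \alpha(d,r)\,t \;+\; O(t^2) \qquad (t\to 0^+).
\end{equation*}
The constant term vanishes because the $s=0$ residues from $r$ and $d-r$ cancel via $\zeta(0,r/d)+\zeta(0,(d-r)/d)=0$, which also explains the $r$-independence of the leading constant in Theorem \ref{T:BCAsymp}. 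Substituting this refined expansion into Wright's saddle-point analysis produces
\begin{equation*}
B_{d,r}(n) \;=\; \frac{1}{2^{5/4}3^{1/4}d^{1/4}n^{3/4}}\,e^{\pi\sqrt{2n/(3d)}}\left(1+\frac{c_B(d,r)}{\sqrt{n}}+O\!\left(\tfrac{1}{n}\right)\right),
\end{equation*}
where $c_B(d,r)$ is a fixed multiple of $\alpha(d,r)$. The inequality for $B$ then reduces to determining the sign of $c_B(d,r+1)-c_B(d,r)$, which is an elementary polynomial computation in $r$ and $d$.

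For $C_{d,r}$ the same strategy applies via the factorisation $\sC_{d,r}(q)=\sB_{d,r}(q)\,g_3(-q^r;q^d)$ from the remark following Theorem \ref{T:univ}. The specialisation $g_3(-1;1)=1/3$ accounts for the factor $1/3$ in Theorem \ref{T:BCAsymp}, while the subleading contribution comes from jointly expanding both arguments of $g_3(-q^r;q^d)$ as $t\to 0^+$. This is the main technical obstacle: the defining series of $g_3$ is not directly amenable to Euler--Maclaurin, so the mock modular structure must be exploited, either through an Appell--Lerch representation of $g_3$ in the sense of Zwegers, or by leveraging the explicit identity for $\sC_{d,r}(q)$ in terms of $\vartheta$ and $\eta(d\tau)$ given in Theorem \ref{T:univ}, whose modular transformation laws deliver the full $t$-expansion. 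Combining the two expansions yields a corresponding formula
\begin{equation*}
C_{d,r}(n) \;=\; \frac{1}{3}\cdot\frac{1}{2^{5/4}3^{1/4}d^{1/4}n^{3/4}}\,e^{\pi\sqrt{2n/(3d)}}\left(1+\frac{c_C(d,r)}{\sqrt{n}}+O\!\left(\tfrac{1}{n}\right)\right),
\end{equation*}
and the inequality between $C_{d,r+1}$ and $C_{d,r}$ follows from the sign of $c_C(d,r+1)-c_C(d,r)$.
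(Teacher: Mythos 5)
Your part (i) and your treatment of $B_{d,r}$ are fine: the Mellin--Hurwitz zeta expansion of $\log \sB_{d,r}(e^{-t})$ is a legitimate substitute for the paper's route (which instead applies the inversion formulas for $\eta$ and $\vartheta$ to Proposition \ref{P:Bmod}), and it produces the same data --- no constant term and the linear coefficient $\frac{d}{12}-\frac{r}{2}+\frac{r^2}{2d}$, matching $\beta_1'(r)$ in Lemma \ref{L:FonC1} --- after which Wright's analysis and a sign check of the difference of the second-order coefficients finish the claim for $B$ exactly as in the paper (Theorem \ref{T:BCExpn} and Corollary \ref{C:Asymp}).

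The genuine gap is in the $C_{d,r}$ case, which you correctly identify as the crux but do not resolve, and the resolutions you propose would not work as stated. Writing $\sC_{d,r}(q)=\sB_{d,r}(q)\,g_3(-q^r;q^d)$, the needed input is the expansion of $g_3(-q^r;q^d)$ through first order at $q=1$. Your first suggested route, ``leveraging the explicit identity of Theorem \ref{T:univ}, whose modular transformation laws deliver the full $t$-expansion,'' is circular: that identity is $\sC_{d,r}(q)=-q^{-d/12+r/2}\,\vartheta(\tfrac12+r\tau;d\tau)\,\eta(d\tau)^{-1}\,g_3(-q^r;q^d)$, so transforming $\vartheta$ and $\eta$ only reproduces the $\sB_{d,r}$ factor and leaves $g_3$ untouched. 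Your second route (a Zwegers/Appell--Lerch completion of $g_3$ with the attendant Mordell integrals) could in principle work but is left entirely unexecuted, and it is far heavier than necessary. The paper's key observation --- stated explicitly in the remark after the proof of Theorem \ref{T:mixedmock} --- is that no modularity of $g_3$ is needed here: for the arguments $(-q^r,q^d)$ the series $G(q)=\sum_{n\ge0} q^{dn(n+1)}/(-q^r,-q^{d-r};q^d)_{n+1}$ converges even at $q=1$ (each term tends to $4^{-(n+1)}$), so one simply Taylor-expands termwise at $z=0$, obtaining $G(0)=\tfrac13$ and, via logarithmic differentiation, $G'(0)=\tfrac{2d}{27}$; this yields $\beta_2'(r)=\tfrac{11d}{108}-\tfrac r6+\tfrac{r^2}{6d}$ and hence the $C$-inequality by the same sign computation. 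Contrary to your assertion, the defining series of $g_3$ \emph{is} directly amenable to elementary expansion in this regime. Finally, your sketch omits the minor-arc estimate for $\sC_{d,r}$; the paper handles it (Lemma \ref{L:FonC2}) using the bilateral Appell-type representation from Proposition \ref{P:Chyper}, and some such bound must be supplied before the saddle-point main term can be claimed to dominate.
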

We can also compare the enumeration families across different $d$ values.
\begin{theorem}
Suppose that $3 \leq d_1 < d_2$ and $1 \leq r_1 < \frac{d_1}{2}$, $1 \leq r_2 < \frac{d_2}{2}.$ For sufficiently large $n$ we have the inequalities
\begin{align*}
B_{d_1, r_1}(n) & > B_{d_2, r_2}(n), \\
C_{d_1, r_1}(n) & > C_{d_2, r_2}(n).
\end{align*}
\end{theorem}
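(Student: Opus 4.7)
The plan is to deduce this comparison theorem directly from the asymptotic formulas established in Theorem \ref{T:BCAsymp}. The crucial observation is that in the formula
\[
B_{d,r}(n) \sim \frac{1}{2^{5/4} 3^{1/4} d^{1/4} n^{3/4}} \, e^{\pi \sqrt{2n/(3d)}},
\]
the dependence on $r$ has completely disappeared from the leading asymptotic, and the dependence on $d$ appears both in the polynomial prefactor and, more importantly, in the exponential growth rate $\pi\sqrt{2n/(3d)}$, which is strictly decreasing in $d$.

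The first step is to write down the ratio
\[
\frac{B_{d_1,r_1}(n)}{B_{d_2,r_2}(n)} \sim \left(\frac{d_2}{d_1}\right)^{1/4} \exp\!\left(\pi\sqrt{\tfrac{2n}{3}}\left(\tfrac{1}{\sqrt{d_1}} - \tfrac{1}{\sqrt{d_2}}\right)\right).
\]
Since $d_1 < d_2$, the quantity $\frac{1}{\sqrt{d_1}} - \frac{1}{\sqrt{d_2}}$ is strictly positive, so the exponential factor tends to $+\infty$ as $n \to \infty$. Consequently the ratio itself tends to $+\infty$, and in particular it exceeds $1$ for all sufficiently large $n$, which yields the first inequality $B_{d_1,r_1}(n) > B_{d_2,r_2}(n)$.

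For the second inequality, I would invoke the relation $C_{d,r}(n) \sim \frac{1}{3} B_{d,r}(n)$ from Theorem \ref{T:BCAsymp}. Because this factor of $\frac{1}{3}$ is the same on both sides of the comparison, it cancels in the ratio:
\[
\frac{C_{d_1,r_1}(n)}{C_{d_2,r_2}(n)} \sim \frac{B_{d_1,r_1}(n)}{B_{d_2,r_2}(n)},
\]
and the same argument as above gives $C_{d_1,r_1}(n) > C_{d_2,r_2}(n)$ for all sufficiently large $n$.

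There is essentially no hard step in this argument once Theorem \ref{T:BCAsymp} is in hand; the only mild care needed is to be explicit that $\sim$ allows us to pass from asymptotic equivalence to a strict inequality, which is justified because the ratio of asymptotics diverges (rather than tending to a finite constant near $1$). The genuine analytic work is thus pushed entirely into the circle-method estimate that produces Theorem \ref{T:BCAsymp}; the present statement is a clean corollary of the fact that the exponential growth rate of both $B_{d,r}(n)$ and $C_{d,r}(n)$ is governed solely by $d$ and is monotonically decreasing in $d$.
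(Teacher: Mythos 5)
Your proposal is correct and matches the paper's intent: the paper treats this theorem as an immediate consequence of the asymptotic formulas produced by Wright's Circle Method (Theorems \ref{T:BCAsymp} and \ref{T:BCExpn}), and since the exponential growth rate $e^{\pi\sqrt{2n/(3d)}}$ is strictly decreasing in $d$, the leading terms alone settle the cross-$d$ comparison—exactly your argument. Unlike Theorem \ref{T:Ineq}, no second-order term (as in Corollary \ref{C:Asymp}) is needed here, and your ratio argument, including the cancellation of the factor $\frac{1}{3}$ for the $C_{d,r}$ case, is sound.
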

\begin{remark*}
In light of \eqref{E:Schur} and Theorem \ref{T:univ}, the two pairs of asymptotic results from these theorems are statements about the asymptotic expansion of the coefficients of modular forms and mixed mock modular forms, respectively.  In the case of the $B_{d,r}(n)$, it is entirely well-known that exact formulas can be found for the coefficients of modular forms \cite{Rad43}, and we include the inequalities for the sake of completeness.  The second case is more novel, and it is only due to more recent advances in the asymptotic analysis of automorphic $q$-series (as in \cite{BM11}) that we are able to compare the $C_{d,r}(n)$.
\end{remark*}
\begin{remark*}
It would be of great interest if a bijective proof could be found for any of these asymptotic results.
\end{remark*}
Our final results describe the surprising relationship between the Schur-type generating functions and events in certain probability spaces with infinite sequences of independent events.
In particular, we find a remarkable interpretation in terms of conditional probabilities for the universal mock theta function evaluated at real arguments; the precise definitions for the following result are found in Section \ref{S:Prob}.
\begin{theorem}
\label{T:g3Prob}
Suppose that $d \geq 3$, $1 \leq r < \frac{d}{2}$ and $0 < q < 1$ is real.  There are events $W$ and $X$ in a certain probability space (see \eqref{E:UV} and Theorem \ref{T:prob}) such that
\begin{equation*}
\Prob(W \mid X) = g_3\left(-q^r; q^d\right).
\end{equation*}
\end{theorem}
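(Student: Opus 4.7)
The plan is to realize $g_3(-q^r; q^d)$ as a conditional probability by exploiting the factorization \eqref{E:C=B}, which expresses the universal mock theta function as the ratio $\sC_{d,r}(q)/\sB_{d,r}(q)$ of two combinatorial generating series. Since both enumerate partitions into distinct parts (with various gap and congruence restrictions), this ratio is naturally modeled by an independent Bernoulli-product measure.

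I would first construct the probability space. For each positive integer $k$, let $\xi_k$ be an independent Bernoulli random variable with $\Prob(\xi_k = 1) = q^k/(1+q^k)$ and $\Prob(\xi_k = 0) = 1/(1+q^k)$. The convergence of $\prod_{k\geq 1}(1+q^k) = (-q;q)_\infty$ for $0<q<1$ makes this a well-defined product measure on $\{0,1\}^{\N}$, and Borel--Cantelli ensures that the random set $S := \{k\geq 1 : \xi_k = 1\}$ is almost surely finite. A direct computation then yields
\begin{equation*}
\Prob(S = \lambda) \;=\; \prod_{k\in\lambda}\frac{q^k}{1+q^k}\prod_{k\notin\lambda}\frac{1}{1+q^k} \;=\; \frac{q^{|\lambda|}}{(-q;q)_\infty}
\end{equation*}
for every finite $\lambda \subset \N$, so that $S$ is weighted by $q^{|S|}$ over all partitions into distinct parts, with total mass $1$ by Euler's identity.

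Next I would identify the two events. Let $X$ be the event that the random partition $S$ satisfies the $B_{d,r}$ conditions (each part is congruent to $0$ or $\pm r\pmod{d}$, successive parts differ by at least $d$, with strict excess whenever a part is a multiple of $d$), and let $W$ be the further event that additionally the smallest part of $S$ exceeds $d$, i.e., $S$ is of $C_{d,r}$-type. Then $W\subseteq X$, both are measurable as countable unions of cylinder sets, and summing the single-partition probabilities against the generating-function definitions \eqref{E:Bdrq} and the analogous definition of $\sC_{d,r}(q)$ gives
\begin{equation*}
\Prob(X) \;=\; \frac{\sB_{d,r}(q)}{(-q;q)_\infty}, \qquad \Prob(W) \;=\; \frac{\sC_{d,r}(q)}{(-q;q)_\infty}.
\end{equation*}

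Since $W\subseteq X$, the ratio collapses to
\begin{equation*}
\Prob(W\mid X) \;=\; \frac{\Prob(W)}{\Prob(X)} \;=\; \frac{\sC_{d,r}(q)}{\sB_{d,r}(q)} \;=\; g_3\!\left(-q^r;q^d\right),
\end{equation*}
where the final equality is exactly \eqref{E:C=B} obtained as a by-product of Theorem \ref{T:univ}. The main obstacle is not analytic but structural: everything hinges on the nontrivial closed form \eqref{E:C=B}, which is what forces an otherwise opaque quotient of $q$-series to equal a single mock theta value. Once that identity is available, the probabilistic part is routine bookkeeping, the only care needed being that the gap/congruence defining conditions of $B_{d,r}$ and $C_{d,r}$ produce measurable events, which is immediate because they are coordinatewise conditions on the $\xi_k$.
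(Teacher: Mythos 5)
Your proof is correct, and it reaches the theorem by a genuinely different route than the paper, even though it uses the same underlying Bernoulli measure (your $\xi_k$ have exactly the probabilities $p_k=\frac{q^k}{1+q^k}$ of \eqref{E:ProbE}) and the same key analytic input \eqref{E:C=B}. The paper works with the ``local'' gap events $U_{d,r}$, $V_{d,r}$ of \eqref{E:UV}, derives a system of recurrences for $\Prob(\calV_k)$, matches it with the $q$-difference equation \eqref{E:hrec} satisfied by the renormalized generating function $h_{d,r}$, and invokes uniqueness of solutions to identify $\Prob(\calU_k)=h_{d,r}(q^{kd})$ before computing the conditional probabilities; this yields not just Theorem \ref{T:g3Prob} but the whole family of identities in Theorem \ref{T:prob}, including part (i). You instead bypass all recurrences: you observe that the product measure assigns each finite set $\lambda$ the mass $q^{|\lambda|}/(-q;q)_\infty$, so conditioning the random distinct-parts partition on being of $B_{d,r}$-type and asking for $C_{d,r}$-type gives directly $\Prob(W\mid X)=\sC_{d,r}(q)/\sB_{d,r}(q)$, which is $g_3(-q^r;q^d)$ by \eqref{E:C=B} (and $\sB_{d,r}(q)>0$ for real $0<q<1$, so the division is legitimate). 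Note that your events are not literally those of the paper: your $X$ additionally forces $F_j$ for every $j\not\equiv 0,\pm r\pmod d$, whereas $U_{d,r}$ is independent of those coordinates; but precisely because of that independence one has $\Prob(W\mid X)=\Prob(F_rF_{d-r}F_d\mid U_{d,r})$, so your statement and the paper's part (ii) coincide numerically, and since the theorem is an existence claim your construction suffices. What the paper's recurrence approach buys is the richer structural information ($\Prob(\calU_k)=h_{d,r}(q^{kd})$ for all $k$, and the reciprocal identity of Theorem \ref{T:prob}(i)) and the parallel with Andrews' $q$-difference-equation proof; what your approach buys is brevity and transparency, reducing the probabilistic content to routine bookkeeping once \eqref{E:C=B} is known.
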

\begin{remark*}
Since probabilities are between $0$ and $1$, Theorem \ref{T:g3Prob} immediately implies that for real $0 \leq q < 1$ we have the striking (and non-obvious) bound
\begin{equation*}
g_3\left(-q^r; q^d\right) < 1.
\end{equation*}
\end{remark*}

The remainder of the paper is structured as follows.  In Section \ref{S:Gen} we carefully study the combinatorics of the Schur-type enumeration functions, deriving a $q$-difference equation whose solution gives useful $q$-series expressions for the generating functions.  We also identify the automorphic properties of these $q$-series.  In Section \ref{S:Asymp} we turn to the asymptotic behavior of the Schur-type functions, using a modification of Wright's Circle Method in order to find asymptotic expansions for the coefficients.  We conclude in Section \ref{S:Prob} by defining a simple probability space that is intimately related to the Schur-type partitions, and use this to prove additional identities for the universal mock theta function.

\section{Generating functions, identities, and automorphic $q$-series}
\label{S:Gen}

In this section we evaluate the generating functions for Schur-type partitions, prove related $q$-series identities, and describe the relationship to automorphic objects such as theta functions and mock theta functions.

\subsection{Generating functions as hypergeometric $q$-series}
We begin by introducing a combinatorial refinement of the enumeration functions, from which we derive a $q$-difference equation; the hypergeometric solution of the equation then gives the desired $q$-series expressions.  Our definitions are influenced by Andrews' work in \cite{And68}, which used the case $d=3$ from the general construction that follows.
Specifically, for integer parameters $d \geq 3, 1 \leq r < \frac{d}{2}$, and $m \geq 1,$ $j, n \geq 0$, we define
\begin{align*}
\beta_{d,r,j}(n, m):=  \# \Big\{ & \lambda \vdash n \: : \: \lambda = \lambda_1 + \dots + \lambda_m \text{ where } \lambda_i > j \text{ and } \lambda_i \equiv 0, \pm r \! \pmod{d} \; \forall i, \\
                    & \text{ with gaps } \lambda_{i}-\lambda_{i+1} \geq d, \text{ and furthermore } \lambda_{i}-\lambda_{i+1} > d \text{ if } d | \lambda_i \Big\}.
\end{align*}
We also adopt the convention that $\beta_{d,r,j}(0,0) := 1.$
It follows immediately from the definitions that
\begin{align}
B_{d,r}(n) & =   \sum_{m \geq 0}\beta_{d,r,0}(m,n), \\
C_{d,r}(n) & =   \sum_{m \geq 0}\beta_{d,r,d}(m,n). \notag
\end{align}

We denote the corresponding generating functions by
\begin{equation*}
f_{d,r}(x) = f_{d,r}(x;q) := \sum_{m,n\geq 0} \beta_{d,r,0}(m,n)x^m q^n.
\end{equation*}
We are then particularly interested in finding hypergeometric series for the cases
\begin{align}
\label{E:BC=f}
\sB_{d,r}(q)= f_{d,r}(1;q) = \sum_{n\geq 0} B_{d,r}(n)q^n, \\
\sC_{d,r}(q)= f_{d,r}(q^d;q) = \sum_{n\geq 0} C_{d,r}(n)q^n. \notag
\end{align}
We achieve this by deriving and then solving the following $q$-difference equation.
\begin{proposition}
\label{P:frec}
For $d \geq 3, 1 \leq r < \frac{d}{2}$,
we have
\begin{align*}
f_{d,r}(x) =  \left(1 + xq^r + xq^{d-r}\right) f_{d,r}\left(xq^d\right) + xq^d\left(1 - xq^d\right) f_{d,r}\left(xq^{2d}\right).
\end{align*}
\end{proposition}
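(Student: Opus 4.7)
The plan is to establish the recurrence by classifying the partitions enumerated by $\beta_{d,r,0}$ according to their smallest part. The key observation is that the substitution $x\mapsto xq^d$ in $f_{d,r}(x;q)$ corresponds to adding $d$ to every part: the residue classes $\bmod\, d$, the gap condition $\lambda_i-\lambda_{i+1}\geq d$, and the strict-inequality rule when $d\mid\lambda_i$ are all invariant under a uniform shift by $d$. Since the smallest admissible part is $r$, this operation realizes a bijection between the set of all valid partitions and the set consisting of the empty partition together with valid partitions whose smallest part is at least $d+r$; hence the generating function of the latter set equals $f_{d,r}(xq^d)$.

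Since $1\leq r<d/2$, the admissible parts in increasing order begin $r<d-r<d<d+r<\cdots$. I would split the partitions enumerated by $f_{d,r}(x)$ into five disjoint classes based on the smallest part: the empty partition (A); smallest $=r$ (B); smallest $=d-r$ (C); smallest $=d$ (D); or smallest $\geq d+r$ (E). Classes (A) and (E) jointly contribute $f_{d,r}(xq^d)$ by the observation above. In (B), deleting the smallest part $r$ leaves a partition lying in (A)$\cup$(E) for the tail, giving a contribution of $xq^r f_{d,r}(xq^d)$. In (D), the strict-gap rule at the part $d$ forces the remaining parts to be $\geq 2d+r$; applying the shift identity twice then yields a contribution of $xq^d f_{d,r}(xq^{2d})$.

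Case (C) is the main obstacle: deleting the smallest part $d-r$ leaves a tail partition whose smallest part is $\geq 2d-r$, which is not the image under a single shift of all valid partitions. To handle it, I would instead note that subtracting $d$ from each remaining part identifies such tails bijectively with valid partitions whose smallest part is $\geq d-r$, equivalently valid partitions that do not have $r$ as their smallest part. Their generating function is $f_{d,r}(x)-xq^r f_{d,r}(xq^d)$ by exclusion of class (B). Reapplying the shift $x\mapsto xq^d$ and inserting the weight $xq^{d-r}$ for the deleted part gives the contribution of (C) as
\[
xq^{d-r}\bigl[f_{d,r}(xq^d)-xq^{r+d}f_{d,r}(xq^{2d})\bigr]=xq^{d-r}f_{d,r}(xq^d)-x^2q^{2d}f_{d,r}(xq^{2d}).
\]
Summing the contributions of (A)$+$(E), (B), (C), and (D) and collecting terms produces the claimed identity
\[
f_{d,r}(x)=\bigl(1+xq^r+xq^{d-r}\bigr)f_{d,r}(xq^d)+xq^d\bigl(1-xq^d\bigr)f_{d,r}(xq^{2d}).
\]
Aside from the inclusion-exclusion step in case (C), which is the only non-mechanical part of the argument, the remaining work is routine bookkeeping of the smallest-part classification.
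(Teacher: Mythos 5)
Your proof is correct and takes essentially the same route as the paper: conditioning on the smallest part ($r$, $d-r$, $d$, or larger) and subtracting the single disallowed configuration in which a part $d-r$ is immediately followed by a part $d+r$, which produces exactly the correction term $-x^2q^{2d}f_{d,r}(xq^{2d})$. Your handling of case (C) via the shift bijection simply makes explicit the inclusion-exclusion that the paper's proof describes more briefly.
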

\begin{proof}
We prove the recurrence through a combinatorial inclusion-exclusion argument by conditioning on the smallest part of the partition. Suppose that $\lambda$ is a partition counted by $\beta_{d,r,0}(m,n)$ for some $m$ and $n$.  Then its smallest part is either $r, d-r, d, $ or something larger.  These cases are counted, respectively, by the following sum of generating functions (where for convenience we write $f$ instead of $f_{d,r}$):
\begin{equation}
\label{E:fover}
xq^{r} f\left(xq^d\right) + xq^{d-r} f\left(xq^d\right)
+ xq^{d} f\left(xq^{2d}\right) + 1 \cdot f\left(xq^d\right).
\end{equation}
The term $f\left(xq^d\right)$ ensures that the next smallest part is larger than $d$, while $f\left(xq^{2d}\right)$ gives a part larger than $2d$.

However, \eqref{E:fover} also generates partitions that do not satisfy the Schur-type conditions for $d,r$, so this excess must be subtracted. In particular, if \eqref{E:fover} were precisely to equal to $f(x)$, then iterating the recurrence  would give the term $xq^{d-r} \cdot xq^{d+r} f(xq^{2d}) = x^2 q^{2d} f(xq^{2d})$, which represents a partition with the parts $d-r$ and $d+r$.  Subtracting this unallowed small gap gives the recurrence claimed in the proposition statement.
\end{proof}
\begin{remark*}
Andrews originally proved this result for $d=3$ in \cite{And68} by first describing a family of recurrences satisfied by the $\beta_{3,1,j}(n,m)$, and then turning to their generating functions.  However, it is more direct to instead prove the $q$-difference equation through a direct combinatorial analysis of the underlying partitions, as we have done here.
\end{remark*}
\begin{remark*}
A more general version of the recurrence in Proposition \ref{P:frec} is found in (2.1) of \cite{AG93}, although the most interesting automorphic $q$-series arise from specializing the parameters as in our statement.  However, Alladi and Gordon's study also includes many notable combinatorial results; the reader is particularly encouraged to consult equation (1.1) of \cite{AG93} for further details on the appearance of $f_{d,r}(x;q)$ in the theory of infinite continued fractions.
\end{remark*}

Solving the $q$-difference equation in Proposition \ref{P:frec} gives the following hypergeometric expression for $f_{d,r}.$
\begin{proposition}
\label{P:fqseries}
For $d \geq 3, 1 \leq r < \frac{d}{2}$ and $|q| < 1$, we have
\begin{equation*}
f_{d,r}(x;q) = \left(x;q^d\right)_\infty \sum_{n \geq 0} x^n \frac{\left(-q^r,-q^{d-r};q^d\right)_{n}}{\left(q^d;q^d\right)_{n}}.
\end{equation*}
\end{proposition}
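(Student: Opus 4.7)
The plan is to solve the $q$-difference equation from Proposition~\ref{P:frec} by making the ansatz $f_{d,r}(x;q) = (x;q^d)_\infty\, g(x;q)$ with $g(x;q) = \sum_{n \geq 0} c_n x^n$, and then showing that the coefficients $c_n$ are forced to equal $(-q^r,-q^{d-r};q^d)_n/(q^d;q^d)_n$. The reason this substitution is natural is that $(xq^d;q^d)_\infty = (x;q^d)_\infty/(1-x)$ and $(xq^{2d};q^d)_\infty = (x;q^d)_\infty/[(1-x)(1-xq^d)]$, so pulling out the infinite product simultaneously cancels the leading factor and the troublesome $(1-xq^d)$ appearing on the right-hand side of the recurrence.

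Carrying out this substitution in the recurrence of Proposition~\ref{P:frec} and then dividing through by $(x;q^d)_\infty/(1-x)$ reduces the $q$-difference equation to the much cleaner
\begin{equation*}
(1-x)\,g(x;q) = \left(1 + xq^r + xq^{d-r}\right) g\!\left(xq^d;q\right) + xq^d\, g\!\left(xq^{2d};q\right),
\end{equation*}
which is only first-order in $x$ on the right. Comparing coefficients of $x^n$ for $n \geq 1$ gives
\begin{equation*}
c_n - c_{n-1} \; = \; c_n\, q^{dn} \; + \; c_{n-1}\!\left(q^{(n-1)d+r} + q^{nd-r}\right) \; + \; c_{n-1}\, q^{(2n-1)d},
\end{equation*}
which I rearrange as $c_n(1-q^{dn}) = c_{n-1}\bigl[\,1 + q^{(n-1)d+r} + q^{nd-r} + q^{(2n-1)d}\,\bigr]$.

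The crucial algebraic observation is that if we set $u := q^{(n-1)d+r}$ and $v := q^{(n-1)d+(d-r)} = q^{nd-r}$, then $uv = q^{(2n-1)d}$, so the bracket factors perfectly as $(1+u)(1+v)$. The recurrence therefore becomes
\begin{equation*}
c_n \;=\; c_{n-1}\cdot\frac{\bigl(1+q^{(n-1)d+r}\bigr)\bigl(1+q^{(n-1)d+(d-r)}\bigr)}{1-q^{dn}}.
\end{equation*}
Combined with the initial condition $c_0 = 1$, which comes from $f_{d,r}(0;q) = 1$ (since $\beta_{d,r,0}(0,0) = 1$ is the only contribution at $x^0$), iterating this first-order recursion immediately telescopes into the claimed product $(-q^r;q^d)_n(-q^{d-r};q^d)_n/(q^d;q^d)_n$. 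Uniqueness of the formal power series solution is automatic: the simplified recurrence for $g$ is first-order, and the normalizing factor $1-q^{dn}$ is nonzero for $|q|<1$.

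The main point requiring insight (rather than the routine computation) is spotting the clean two-factor factorization of $1 + q^{(n-1)d+r} + q^{nd-r} + q^{(2n-1)d}$; it is what makes the $q$-Pochhammer product appear, and one can view it as the algebraic shadow of the $r \leftrightarrow d-r$ symmetry visible in the Schur-type gap conditions. Beyond that, the proof is mechanical, and no additional identity beyond Proposition~\ref{P:frec} is needed.
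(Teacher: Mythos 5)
Your proof is correct and follows essentially the same route as the paper: normalize by $(x;q^d)_\infty$ to obtain the simplified recurrence $(1-x)g(x) = (1+xq^r+xq^{d-r})g(xq^d) + xq^d\,g(xq^{2d})$, compare coefficients of $x^n$, and solve the resulting first-order recursion with $c_0=1$ (your explicit factorization $1+u+v+uv=(1+u)(1+v)$ is exactly the simplification the paper performs implicitly). No substantive differences to report.
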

\begin{proof}
It is more convenient to renormalize the recurrence from Proposition \ref{P:frec} by setting
\begin{equation}
\label{E:gdr}
g_{d,r}(x) := \frac{f_{d,r}(x)}{(x;q^d)_{\infty}}.
\end{equation}
Then we have the recurrence (again writing $g$ instead of $g_{d,r}$)
\begin{equation}
\label{E:grec}
(1-x) g(x)= \left(1 + xq^r + x q^{d-r}\right)g\left(x q^d\right) + x q^d g\left(x q^{2d}\right).
\end{equation}
We now consider the expansion of $g$ as a series in $x$, writing
$$
g(x) = \sum_{n \geq 0}A_n x^n,
$$
where $A_n = A_n(q)$ are rational expressions in $q$.

Isolating the coefficient of $x^n$ in \eqref{E:grec} now gives the recurrence
\begin{equation*}
A_n - A_{n-1} = A_n q^{dn} + A_{n-1}\left( q^{d(n-1)+ r} + q^{dn-r} + q^{d(2n-1)}\right).
\end{equation*}
Simplifying, we find that
\begin{equation*}
A_n = \frac{\left(1 + q^{d(n-1)+r}\right)\left(1 + q^{dn-r}\right)}{1-q^{dn}}A_{n-1}.
\end{equation*}
Using the initial condition $A_0 = 1$, we can therefore solve the recurrence to find the unique solution (cf. Lemma 1 in \cite{And68Q})
\begin{equation}
\label{E:ghyper}
g(x) = \sum_{n \geq 0} x^n \frac{\left(-q^r,-q^{d-r};q^d\right)_{n}}{\left(q^d;q^d\right)_{n}}.
\end{equation}
The proof is now complete upon comparison with \eqref{E:gdr}.
\end{proof}

We now use transformations for hypergeometric $q$-series in order to find additional representations for the generating functions that directly display their automorphic properties.
We begin by recalling the following $_{3}\phi_{2}$ transformation, which is equivalent to equation (III.10) in \cite{GR90}
\begin{equation}
\label{E:3phi2}
\sum_{n\geq 0}\frac{\left(\frac{aq}{bc},d,e\right)_{n}}{\left(q,\frac{aq}{b},\frac{aq}{c}\right)_{n}}\left(\frac{aq}{de}\right)^n= \frac{\left(\frac{aq}{d},\frac{aq}{e},\frac{aq}{bc}\right)_{\infty}}{\left(\frac{aq}{b},\frac{aq}{c},\frac{aq}{de}\right)_{\infty}}
\sum_{n\geq 0}\frac{\left(\frac{aq}{de},b,c\right)_{n}}{\left(q,\frac{aq}{d},\frac{aq}{e}\right)_{n}}\left(\frac{aq}{bc}\right)^n.
\end{equation}
We also recall a special case of the Watson-Whipple transformation for $_8\phi_7$ (let $n \to \infty$ in (III.18) of \cite{GR90}), namely
\begin{equation}
\label{E:WW}
\sum_{n\geq 0}\frac{\left(\frac{aq}{bc},d,e\right)_{n}}{\left(q,\frac{aq}{b},\frac{aq}{c}\right)_{n}}\left(\frac{aq}{de}\right)^n = \frac{\left(\frac{aq}{d},\frac{aq}{e}\right)_{\infty}}{\left(aq,\frac{aq}{de}\right)_{\infty}} \sum_{n\geq 0} \frac{\left(a,b,c,d,e\right)_{n}\left(1- a q^{2n}\right)}{\left(q,\frac{aq}{b},\frac{aq}{c},\frac{aq}{d},\frac{aq}{e}\right)_{n}} \frac{\left(aq\right)^{2n}(-1)^n q^{\frac{n(n-1)}{2}}}{(1-a)(bcde)^n}.
\end{equation}

\begin{proposition}
\label{P:Chyper}
For $d \geq 3$ and $1 \leq r < \frac{d}{2}$ we have the identities:
\begin{equation}
\label{P:Chyper:bi}
\sC_{d,r}(q) = \frac{\left(-q^r, -q^{d-r}; q^d\right)_\infty}{\left(q^d; q^d\right)_\infty}\sum_{n\in\Z}\frac{(-1)^n q^{\frac{3dn(n+1)}{2}}}{1+q^{r+dn}}
= \left(-q^{r},-q^{d-r};q^d\right)_{\infty}g_{3}\left(-q^r;q^d\right).
\end{equation}
\end{proposition}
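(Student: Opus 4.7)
The plan is to derive both identities from Proposition \ref{P:fqseries} specialized at $x = q^d$, which gives
$$\sC_{d,r}(q) = \left(q^d; q^d\right)_\infty \sum_{n \geq 0} \frac{\left(-q^r, -q^{d-r}; q^d\right)_n}{\left(q^d; q^d\right)_n} q^{dn}.$$
The key observation is that, after dividing by $(q^d; q^d)_\infty$, this sum is the common left-hand side of both \eqref{E:3phi2} and \eqref{E:WW}, taken in base $q^d$, once the parameters labeled $a, d, e$ there (the letter $d$ in the identities being a dummy parameter, unrelated to the Schur modulus) are specialized to $q^d, -q^r, -q^{d-r}$ respectively and one lets $b, c \to \infty$: with these choices $\tfrac{aq^d}{de} = q^d$, while $\tfrac{aq^d}{b}, \tfrac{aq^d}{c}, \tfrac{aq^d}{bc}$ all tend to $0$. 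Thus the two equalities claimed in the proposition will emerge by evaluating the respective limiting right-hand sides.

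To prove the $g_3$-form, apply \eqref{E:3phi2}. In the limit $b, c \to \infty$ the prefactor reduces to $(-q^{2d-r}, -q^{d+r}; q^d)_\infty/(q^d; q^d)_\infty$, and the sum becomes $\sum_{n \geq 0} q^{dn(n+1)}/(-q^{2d-r}, -q^{d+r}; q^d)_n$, once one notes that $(b;q^d)_n (c;q^d)_n (aq^d/bc)^n$ tends termwise to $a^n q^{dn^2} = q^{dn(n+1)}$. Iterating the Pochhammer shift $(-q^{d+r}; q^d)_n(1 + q^r) = (-q^r; q^d)_{n+1}$, together with the analogue after swapping $r \leftrightarrow d-r$, in both the prefactor and the summand identifies the right-hand side with $(-q^r, -q^{d-r}; q^d)_\infty/(q^d; q^d)_\infty$ times $g_3(-q^r; q^d)$, the compensating factors $(1+q^r)(1+q^{d-r})$ canceling. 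Multiplying through by $(q^d; q^d)_\infty$ then gives the second equality.

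To prove the bilateral Appell-Lerch identity, apply \eqref{E:WW} under the same specialization. The $b, c$-dependent part now satisfies $(b, c; q^d)_n /(bc)^n \to q^{dn(n-1)}$, which together with the weight $(aq^d)^{2n}(-1)^n q^{dn(n-1)/2}/((1-a)(bcde)^n)$ and $de = q^d$ combines into a net factor $(-1)^n q^{3dn(n+1)/2}/(1-q^d)$; the assembly of the Gaussian exponent $3dn(n+1)/2$ from these four separate contributions is the step that requires the most care and is the main technical obstacle. The residual Pochhammer quotient simplifies by the same shift identities to $(1+q^r)(1+q^{d-r})(1 - q^{d(2n+1)})/[(1+q^{r+dn})(1+q^{d-r+dn})]$, and combining this with the prefactor $(-q^{2d-r}, -q^{d+r}; q^d)_\infty/(q^{2d}, q^d; q^d)_\infty$ via $(q^{2d}; q^d)_\infty(1-q^d) = (q^d; q^d)_\infty$ yields
$$\sC_{d,r}(q) = \frac{\left(-q^r, -q^{d-r}; q^d\right)_\infty}{\left(q^d; q^d\right)_\infty} \sum_{n \geq 0} \frac{(-1)^n q^{\frac{3dn(n+1)}{2}}\left(1 - q^{d(2n+1)}\right)}{\left(1+q^{r+dn}\right)\left(1+q^{d-r+dn}\right)}.$$
The proof concludes with the elementary partial-fraction identity
$$\frac{1 - q^{d(2n+1)}}{\left(1+q^{r+dn}\right)\left(1+q^{d-r+dn}\right)} = \frac{1}{1+q^{r+dn}} - \frac{q^{d-r+dn}}{1+q^{d-r+dn}},$$
followed by the change of variables $n \mapsto -n-1$ applied to the second piece, which folds the unilateral sum into the bilateral Appell-Lerch series $\sum_{n \in \Z} (-1)^n q^{3dn(n+1)/2}/(1 + q^{r+dn})$ claimed in the proposition.
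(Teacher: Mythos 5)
Your proposal is correct and follows essentially the same route as the paper: both identities come from the $b,c\to\infty$ specializations of \eqref{E:WW} and \eqref{E:3phi2} with base $q^d$ and parameters $-q^r$, $-q^{d-r}$, followed by the Pochhammer shifts, the partial-fraction step, and the folding $n\mapsto -n-1$ into the bilateral sum. The only (immaterial) difference is that you set $a=q^d$ from the outset, whereas the paper keeps $a=x$ general and specializes $x=q^d$ afterwards.
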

\begin{proof}
The first formula is obtained from \eqref{E:WW} by setting $q \mapsto q^{d}$, $a=x$, $b,c \rightarrow \infty$, $d= -q^{r}$, and $e= -q^{d-r}$.
The left hand side of  \eqref{E:WW} then equals
\begin{equation*}
\sum_{n\geq 0}\frac{\left(-q^r, -q^{d-r}; q^d\right)_n}{\left(q^d; q^d\right)_n} x^n=g_{d,r}(x).
\end{equation*}
The right hand side of \eqref{E:3phi2} simplifies to
\begin{equation*}
\frac{\left(-xq^r,-xq^{d-r}; q^d \right)_{\infty}}{\left(xq^d,x;q^d \right)_{\infty}}\sum_{n \geq 0}\frac{\left(x,-q^r,-q^{d-r};q^d \right)_{n}}{\left(q^d,-xq^r,-xq^{d-r};q^d \right)_{n}} \frac{\left(1 - xq^{2dn}\right)}{1-x}(-1)^n x^{2n} q^{\frac{3dn^2 -dn}{2}}.
\end{equation*}
Multiplying by $(x;q^d)_{\infty}$,
we find that
\begin{equation*}
f_{d,r}(x) =
\frac{\left(-xq^r,-xq^{d-r}; q^d \right)_{\infty}}{\left(xq^d;q^d \right)_{\infty}}\sum_{n \geq 0}\frac{\left(x,-q^r,-q^{d-r};q^d \right)_{n}}{\left(q^d,-xq^r,-xq^{d-r};q^d \right)_{n}} \frac{\left(1 - xq^{2dn}\right)}{1-x}(-1)^n x^{2n} q^{\frac{3dn^2-dn}{2}}.
\end{equation*}
Setting $x=q^d$  gives
\begin{equation}
\label{E:Cdrsubs}
\sC_{d, r}(q)=f_{d,r}\left(q^d\right)=\frac{\left(-q^r,-q^{d-r}; q^d \right)_{\infty}}{\left(q^d;q^d \right)_{\infty}}\sum_{n \geq 0}\frac{(-1)^n \left(1 - q^{d(2n+1)}\right) q^{\frac{3dn^2}{2}+ \frac{3dn}{2}}}{(1+q^{r +dn})(1+ q^{d-r+dn})}.
\end{equation}
Using the routine partial fraction decomposition
\begin{equation*}
\frac{1 - q^{d(2n+1)}}{(1+q^{r +dn})(1+ q^{d-r+dn})} = \frac{1}{1+q^{r+dn}}- \frac{q^{d-r+dn}}{1+ q^{d-r+dn}},
\end{equation*}
we can rewrite the sum in \eqref{E:Cdrsubs} as the bilateral summation
\begin{equation*}
\sum_{n\in\Z}\frac{(-1)^n q^{\frac{3dn(n+1)}{2}}}{1+q^{r+dn}}.
\end{equation*}
This completes the proof of the first identity in \eqref{P:Chyper:bi}.

For the second expression, we note that the left hand side of \eqref{E:3phi2} is the same as in \eqref{E:WW}.  We therefore proceed by making the same substitutions as above: $q \mapsto q^d$, $b,c \rightarrow \infty$, $a=x$, $d=-q^r$, and $e= -q^{d-r}.$
This gives
\begin{equation}
\label{E:gdrhyper2}
g_{d,r}(x)=\frac{\left(-xq^r,-xq^{d-r}; q^d\right)_{\infty}}{\left(x;q^d\right)_{\infty}}\sum_{n \geq 0} \frac{\left(x;q^d\right)_{n}x^n q^{dn^2}}{\left(q^d,-xq^r, -xq^{d-r};q^d\right)_{n}}.
\end{equation}
Setting $x=q^d$ gives the overall expression
\begin{equation*}
\sC_{d, r}(q)=\left(-q^{d+r},-q^{2d-r};q^d\right)_{\infty}\sum_{n \geq 0} \frac{q^{dn(n+1)}}{(-q^{r+d},-q^{2d-r};q^d)_n}.
\end{equation*}
Combined with the definition of the universal mock theta function from \eqref{E:Univg3}, this gives the statement.
\end{proof}
\begin{remark*}
We can also use \eqref{E:3phi2} to recover the formula for $\sB_{d,r}(q)$ from \eqref{E:Schur}.  In particular, if we multiply \eqref{E:gdrhyper2} by $(x; q^d)_\infty$ and set $x=1$,
then every term of the sum vanishes except for $n=0$, and hence we directly obtain the infinite product.
\end{remark*}

\subsection{Automorphic properties}
\label{S:Gen:Auto}
We now describe the automorphicity of the generating functions from above and prove Theorem \ref{T:univ}.  We first recall several basic facts about automorphic and Jacobi forms, although the definitions are stated very roughly (we specifically suppress technical discussion of the finer points of multiplier systems and level structure), as our primary aims are practical; we wish to describe how the combinatorial study of Schur-type partitions leads to the fundamental objects from the theory of automorphic $q$-series, and then to apply the theory to obtain asymptotic results.  The interested reader should consult the cited references for a complete background in the subject.  We also provide special cases of modular transformations for the functions that arise in this paper, as we will use these later for the asymptotic analysis in Section \ref{S:Asymp}.

Briefly, a holomorphic function $f: \mathbb{H} \to \C$ is a {\it weakly holomorphic modular form} of weight $k$ on a congruence subgroup $\Gamma \subset \text{SL}_2(\Z)$ if $f$ is meromorphic at the ``cusps'' of $\Gamma$ and satisfies the modular transformations
\begin{equation}
\label{E:mod}
f\left(\frac{a\tau + b}{c\tau + d}\right) = \chi(\gamma) (c\tau + d)^k f(\tau) \qquad
\text{for all } \gamma = \begin{pmatrix} a & b \\ c& d \end{pmatrix} \in \Gamma,
\end{equation}
where the $\chi(\gamma)$ are certain roots of unity that form a ``multiplier system''.  See \cite{Kob84} for further details.  Furthermore, as in \cite{Zag09}, a holomorphic function $f: \mathbb{H} \to \C$ is a {\it mock modular form} of weight $k$ if there is an associated modular form $g$ of weight $2-k$ (the ``shadow'' of $f$) such that $f + g^\ast$ satisfies a modular transformation of the form \eqref{E:mod}, where the real-analytic correction term is given by
\begin{equation*}
g^\ast(\tau) := \left(\frac{i}{2}\right)^{k-1} \int_{-\overline{\tau}}^\infty \frac{\overline{g\left(-\overline{z}\right)}}{(z+\tau)^k} dz.
\end{equation*}
A {\it mixed mock modular form} is then a product of a modular form and a mock modular form, or a linear combination of such terms; the precise definition reflects a richer tensor structure amongst the vector spaces of automorphic forms, and is found in Section 7.3 of \cite{DMZ12}.
Finally, we also encounter {\it Jacobi forms}, which are complex-valued functions on $\C \times \mathbb{H}$ that satisfy modular-type transformations in the second argument, and certain lattice-invariant transformations in the first argument.  The full theory of such forms may be found in \cite{EZ85}.

We now present the special automorphic functions that arise in the present study.  First, recall Dedekind's eta-function, which is a modular form of weight $\frac{1}{2}$ defined by
\begin{equation}
\label{E:eta}
\eta(\tau) := q^\frac{1}{24}\prod_{n\geq 1} \left(1-q^n\right).
\end{equation}
In particular, it satisfies the inversion formula (Theorem 3.1 in \cite{Apo90})
\begin{equation}
\label{E:etainv}
\eta \left( - \frac{1}{\tau} \right) = \sqrt{-i\tau} \eta(\tau).
\end{equation}
We next recall Jacobi's theta function
\begin{equation}\label{Thetadef}
\vartheta(w) = \vartheta(w;\tau) :=
\sum_{n\in\frac{1}{2}+\Z}e^{\pi i n^2\tau+2\pi in\left(w+\frac{1}{2}\right)},
\end{equation}
which has an equivalent product form (writing $\zeta := e^{2\pi i w}$)
\begin{equation}
\label{E:thetaprod}
\vartheta(w;\tau) = -i q^{\frac{1}{8}} \zeta^{-\frac{1}{2}} \prod_{n\geq 1} (1-q^n)\left(1-\zeta q^{n-1}\right) \left(1-\zeta^{-1}q^n\right).
\end{equation}
This function is a Jacobi form of weight and index $\frac{1}{2}$, and it satisfies the following transformation formulas:
\begin{align}
\label{E:thetaneg}
\vartheta(-w; \tau) & = -\vartheta(w; \tau), \\
\label{E:thetainv}
\vartheta \left( \frac{w}{\tau} ; - \frac{1}{\tau} \right) &= -i \sqrt{-i\tau} e^{\frac{\pi i w^2}{\tau}} \vartheta \left( w; \tau  \right).
\end{align}

Using these definitions the following formula can be immediately verified.
\begin{proposition}
\label{P:Bmod}
If $d \geq 3$, and $1 \leq r < \frac{d}{2}$, then
\begin{equation*}
\sB_{d,r}(q) = -\frac{q^{-\frac{d}{12} +\frac{r}{2}}\vartheta(\frac{1}{2} + r\tau; d\tau)}{\eta(d\tau)}.
\end{equation*}
\end{proposition}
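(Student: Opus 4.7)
The plan is to verify Proposition \ref{P:Bmod} by a direct computation, combining Schur's theorem \eqref{E:Schur} with the Jacobi triple product formula \eqref{E:thetaprod} for $\vartheta$.

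First I would use \eqref{E:Schur} and \eqref{E:Edrq} to replace the combinatorial generating function by the explicit product
\begin{equation*}
\sB_{d,r}(q) = \sE_{d,r}(q) = \left(-q^r, -q^{d-r}; q^d\right)_\infty.
\end{equation*}
The proposition is then equivalent to the purely analytic identity
\begin{equation*}
\left(-q^r, -q^{d-r}; q^d\right)_\infty = -\frac{q^{-\frac{d}{12}+\frac{r}{2}} \vartheta\!\left(\tfrac{1}{2} + r\tau;\, d\tau\right)}{\eta(d\tau)},
\end{equation*}
which I would establish by evaluating the right-hand side in product form.

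Next I would specialize \eqref{E:thetaprod} with $\tau \mapsto d\tau$ and $w = \tfrac{1}{2} + r\tau$, so that $\zeta = e^{2\pi i w} = -q^r$. Under this specialization, the factor $(1-\zeta q^{d(n-1)}) = 1 + q^{r+d(n-1)}$ assembles (over $n\ge 1$) into $(-q^r; q^d)_\infty$, while $(1-\zeta^{-1}q^{dn}) = 1 + q^{dn - r}$ assembles into $(-q^{d-r}; q^d)_\infty$. The remaining $\prod_{n\geq 1}(1-q^{dn}) = (q^d;q^d)_\infty$ then cancels against the corresponding factor in $\eta(d\tau) = q^{d/24}(q^d;q^d)_\infty$, courtesy of \eqref{E:eta}.

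After the cancellation, the identity reduces to matching prefactors. The surviving powers of $q$ combine as $q^{d/8} \cdot q^{-d/24} = q^{d/12}$, and the branch convention $\zeta^{-1/2} = (-q^r)^{-1/2} = -iq^{-r/2}$ produces an additional $q^{-r/2}$ together with a factor $-i$ that multiplies the leading $-i$ from \eqref{E:thetaprod} to give $-1$. Putting everything together, the right-hand side of the proposition simplifies precisely to $(-q^r, -q^{d-r}; q^d)_\infty$, as required. The only delicate point in the argument is the choice of branch for $(-q^r)^{-1/2}$, which is fixed by comparing with the $q$-series expansion of $\vartheta$ around $q = 0$ (equivalently, by reading off the leading term from \eqref{Thetadef}); once this convention is pinned down, the rest of the computation is routine bookkeeping of $q$-powers.
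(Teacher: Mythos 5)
Your proposal is correct and follows the same route the paper intends: the paper simply states that the formula "can be immediately verified" from the definitions, with the surrounding remark indicating precisely the combination of \eqref{E:Schur}, \eqref{E:Edrq}, and the product forms \eqref{E:eta}, \eqref{E:thetaprod} that you carry out, and your bookkeeping (including the branch $\zeta^{-1/2}=e^{-\pi i w}=-iq^{-r/2}$ and the exponent check $-\tfrac{d}{12}+\tfrac{d}{8}-\tfrac{d}{24}=0$) is accurate.
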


\noindent The statement of Theorem \ref{T:univ} then follows immediately by combining \eqref{E:Schur} and Propositions \ref{P:Chyper} and \ref{P:Bmod}.

We close this section by describing the automorphic properties of $\sC_{d,r}(q)$.
%In particular, in the following proof we see that the universal mock theta function does indeed generate many examples of mock modular forms: if $\zeta$ is a root of unity and $r$ is rational, then $g_3(\zeta q^r; q)$ is (essentially) a mock modular form.
\begin{proof}[Proof of Theorem \ref{T:mixedmock}]
Theorem 3.1 of \cite{Kan09} states that if $\alpha \not \in \Z \tau + \frac{1}{3} \Z$, then $g_3\left(e^{2\pi i \alpha}; \tau\right)$ is a mock modular form of weight $\frac{1}{2}$, up to rational $q$-powers (note that one of the terms in Kang's theorem is the meromorphic function $\frac{\eta^3(3\tau)}{\eta(\tau) \vartheta(3\alpha; 3\tau)},$ which is a weakly holomorphic modular form of weight $\frac{1}{2}$ by Theorem 1.3 in \cite{EZ85}).  Sending $\tau \to d\tau$, we then conclude that $g_3(-q^r; q^d)$ is a mock modular form of weight $\frac{1}{2}$.
\end{proof}

\begin{remark*}
 The theory of mock modular forms (refer to \cite{Zw02}) also provides transformation formulas for $g_3(x;q)$ similar to \eqref{E:thetaneg} and \eqref{E:thetainv}, but this is unnecessary in our asymptotic analysis.
\end{remark*}

\section{Asymptotic results}
\label{S:Asymp}

In this section we determine the asymptotic behavior of the enumeration functions $B_{d,r}(n)$ and $C_{d,r}(n)$, proving Theorems \ref{T:BCAsymp} and \ref{T:Ineq}.  We achieve this by first studying the asymptotic properties of the generating functions, and then applying Wright's version of the Circle Method, which was developed in \cite{Wri41, Wri71} (also see \cite{BM12,BMan13} for further adaptations of the approach in other recent applications).

\subsection{Asymptotic expansions and proof outline}
Our primary goal is to give the first two terms in the asymptotic expansions of the enumeration functions.  Theorems \ref{T:BCAsymp} and \ref{T:Ineq} follow immediately from the following results.  Here $I_s$ denotes the standard modified Bessel function (see Section 4.12 in \cite{AAR99}).
\begin{theorem}
\label{T:BCExpn}
Suppose that $d \geq 3$ and $1 \leq r < \frac{d}{2}.$  As $n\rightarrow\infty$, we have
\begin{align*}
B_{d,r}(n) & =  \alpha_1 n^{-\frac12} I_{-1}\left(\pi\sqrt{\frac{2n}{3d}}\right)
+\beta_1(r) n^{-1} I_{-2}\left(\pi\sqrt{\frac{2n}{3d}}\right)
+O\Big(n^{-\frac32} e^{\frac{\pi\sqrt{2n}}{\sqrt{3d}}}\Big), \\
C_{d,r}(n) & =  \alpha_2 n^{-\frac12} I_{-1}\left(\pi\sqrt{\frac{2n}{3d}}\right)
+ \beta_2(r) n^{-1} I_{-2}\left(\pi\sqrt{\frac{2n}{3d}}\right)
+O\Big(n^{-\frac32} e^{\frac{\pi\sqrt{2n}}{\sqrt{3d}}}\Big),
\end{align*}
where
\begin{eqnarray*}
\begin{array}{ll}
\displaystyle \alpha_1  := \frac{\pi}{\sqrt{6d}},
& \displaystyle \alpha_2  := \frac{\pi}{3\sqrt{6d}} , \\
\displaystyle  \beta_1(r) := \frac{\pi^2}{6d}\left(\frac{d}{12} - \frac{r}{2} + \frac{r^2}{2d}\right),
&
\displaystyle  \beta_2(r) := \frac{\pi^2}{6d}\left(\frac{11d}{108}-\frac{r}{6}+\frac{r^2}{6d}\right).
\end{array}
\end{eqnarray*}
\end{theorem}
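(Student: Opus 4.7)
\textbf{Proof Plan for Theorem \ref{T:BCExpn}.}

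The plan is to apply Wright's version of the Circle Method, for which the two inputs are (a) a precise two-term asymptotic expansion of the generating function $\sB_{d,r}(q)$ (resp.\ $\sC_{d,r}(q)$) as $q\to 1^-$, and (b) a crude bound showing the generating function is much smaller on the rest of the unit circle. Writing $q=e^{-t}$ with $t=\pi/\sqrt{6dn}$ (the saddle-point value), Cauchy's theorem gives
\[
B_{d,r}(n)=\frac{1}{2\pi i}\oint_{|q|=e^{-t}}\frac{\sB_{d,r}(q)}{q^{n+1}}\,dq,
\]
which we split into a major arc near $q=e^{-t}$ and a minor arc, and similarly for $C_{d,r}(n)$.

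First I would derive the expansion of $\sB_{d,r}$ at the cusp $q=1$. Using Proposition \ref{P:Bmod} together with the modular inversion formulas \eqref{E:etainv} and \eqref{E:thetainv}, the functions $\eta(d\tau)$ and $\vartheta(\tfrac12+r\tau;d\tau)$ transform to $\eta$- and $\vartheta$-values at $\tau'=-1/(d\tau)$, which tends to $i\infty$ as $t\to 0^+$; expanding the resulting products to one further order in $q'=e^{2\pi i\tau'}$ gives an expansion
\[
\sB_{d,r}(e^{-t})=\alpha_1'\, t^{1/2}\,e^{\pi^2/(6dt)}\bigl(1+\beta_1'(r)\,t+O(t^2)\bigr)
\]
with explicit constants $\alpha_1',\beta_1'(r)$ matching the claimed $\alpha_1,\beta_1(r)$. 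For $\sC_{d,r}$ I would combine this with the asymptotics of $g_3(-q^r;q^d)$ near $q=1$ taken directly from the series \eqref{E:Univg3}: each factor $(-q^{r+jd};q^d)$ and $(-q^{d-r+jd};q^d)$ tends to $2$, and term-wise one obtains the leading value $g_3(-1;1)=\sum_{n\geq 0}4^{-(n+1)}=\tfrac13$, which explains the ratio $\alpha_2=\alpha_1/3$. Expanding each Pochhammer factor to linear order in $t$ and resumming the resulting geometric-type series yields the next coefficient and produces $\beta_2(r)$.

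Next I would execute the major-arc analysis. Parameterizing $q=e^{-t+2\pi i\phi}$ for $|\phi|\leq t^{1+\varepsilon}$, insertion of the two-term expansion converts the arc integral into a pair of Hankel-type contour integrals that evaluate to modified Bessel functions via the standard representation
\[
I_\nu(z)=\frac{1}{2\pi i}\int_{(c)}s^{-\nu-1}\exp\!\bigl(s+\tfrac{z^2}{4s}\bigr)\,ds;
\]
matching the exponent $\pi^2/(6dt)$ with $z=\pi\sqrt{2n/(3d)}$ produces the $I_{-1}$ main term from the leading asymptotic and the $I_{-2}$ correction from the $O(t)$ term. For the minor arc I would use the standard fact that away from the cusp $q=1$ the contribution of $\eta(d\tau)^{-1}\vartheta(\tfrac12+r\tau;d\tau)$ decays exponentially faster than $e^{\pi^2/(6dt)}$ (by invoking the modular transformations at other cusps, which send $\eta$ to a function with strictly smaller real part of the cusp invariant), and for $\sC_{d,r}$ the additional factor $g_3(-q^r;q^d)$ is bounded on the minor arc because its defining series is dominated by $\sum_n |q|^{dn(n+1)}/(1-|q|^r)^{2n+2}$, which is uniformly controlled there.

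The main obstacle I expect is the careful computation of the subleading coefficient $\beta_2(r)$ coming from the mock piece $g_3$: unlike the modular factor, there is no closed-form modular transformation producing the linear-in-$t$ term automatically, so one must expand each Pochhammer $\prod_{j=0}^{n}(1+e^{-(r+jd)t})\cdot\prod_{j=0}^{n}(1+e^{-(d-r+jd)t})$ through order $t$, justify term-by-term summation, and package the answer as a closed expression that matches $\tfrac{\pi^2}{6d}\bigl(\tfrac{11d}{108}-\tfrac{r}{6}+\tfrac{r^2}{6d}\bigr)$. A secondary technical point is showing uniformity of the major-arc expansion in $\phi$ with error consistent with the claimed $O(n^{-3/2}e^{\pi\sqrt{2n/(3d)}})$, which is handled by standard bounds on derivatives of the logarithm of the generating function near the cusp.
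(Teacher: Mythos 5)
Your overall strategy is the same as the paper's: Wright's Circle Method with (a) a two-term expansion of the generating functions as $q\to 1$, obtained for $\sB_{d,r}$ from Proposition \ref{P:Bmod} and the inversions \eqref{E:etainv}, \eqref{E:thetainv}, and for $\sC_{d,r}$ by multiplying with the Taylor expansion of the $g_3$-factor (your computation of the value $\tfrac13$ and of the linear term by differentiating the series \eqref{E:Univg3} term by term is exactly how the paper obtains $\alpha_2'$ and $\beta_2'(r)$, and is routine, not the main obstacle); (b) a minor-arc bound; and (c) conversion of the major arc into Bessel functions. One bookkeeping error in step (a): your claimed expansion $\sB_{d,r}(e^{-t})=\alpha_1't^{1/2}e^{\pi^2/(6dt)}\bigl(1+\beta_1'(r)t+O(t^2)\bigr)$ with $\alpha_1',\beta_1'(r)$ ``matching'' $\alpha_1,\beta_1(r)$ is wrong. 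Both $\vartheta$ and $\eta$ have weight $\tfrac12$, so the $\sqrt{-i\tau}$ factors cancel in the quotient of Proposition \ref{P:Bmod}; the correct statement (the paper's \eqref{E:BMaj}) is $\sB_{d,r}(e^{-t})=e^{\frac{\pi^2}{6dt}+t(\frac{d}{12}-\frac r2+\frac{r^2}{2d})}\bigl(1+O\bigl(e^{-2\pi^2/(dt)}\bigr)\bigr)$: leading constant $1$ and no power of $t$. The factors $\pi/\sqrt{6d}$ and $\pi^2/(6d)$ in $\alpha_j,\beta_j(r)$ come out of the major-arc Bessel integral, not out of the generating function; a genuine $t^{1/2}$ would produce half-integral Bessel orders and an extra $n^{-1/4}$, contradicting the $I_{-1},I_{-2}$ structure you then claim to derive. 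This would presumably self-correct once you actually carry out the inversion, but as written steps (a) and (c) are inconsistent.

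The genuine gap is your minor-arc treatment of the mock factor. The majorant $\sum_{n}|q|^{dn(n+1)}(1-|q|^r)^{-(2n+2)}$ is not ``uniformly controlled'': with $1-|q|^r\asymp rN\asymp n^{-1/2}$ its largest term has size about $\exp\bigl(\tfrac{1}{dN}\log^2\tfrac{1}{N}\bigr)\asymp\exp\bigl(c\sqrt n(\log n)^2\bigr)$, which swamps the main term $e^{\pi\sqrt{2n/(3d)}}$. Worse, no absolute-value bound on the Eulerian series \eqref{E:Univg3} can work: since $|q|=e^{-N}$ is the same on both arcs, bounding the Pochhammer denominators from below costs at least $\bigl(e^{-rN},e^{-(d-r)N};e^{-dN}\bigr)_\infty^{-1}\approx e^{\pi^2/(3dN)}$, i.e.\ you lose exactly the phase savings $\re\bigl(\tfrac1z\bigr)\le\tfrac1{5N}$ that makes the minor arc negligible. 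The paper sidesteps this by never estimating $g_3$ on $\calC_2$: it uses the Appell--Lerch representation of Proposition \ref{P:Chyper}, in which the mock part is the bilateral sum $\sum_{n\in\Z}(-1)^nq^{3dn(n+1)/2}/(1+q^{r+dn})$ with a single bounded-away denominator per term, trivially $O(N^{-2})=O(n)$ on the whole circle, so that all exponential growth sits in the modular prefactors to which the $\re(1/z)$ savings applies. You need this representation (or some bound genuinely exploiting the phase) to close the argument; relatedly, Wright's choice of a major arc of width $\asymp N$ (the paper's $|y|<2N$), rather than your $|\phi|\le t^{1+\varepsilon}$, is what makes the clean bound $\re(1/z)\le\frac1{5N}$ available on all of the minor arc.
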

\noindent The proof of Theorem \ref{T:BCExpn} can easily be extended to give an asymptotic expansion with an arbitrary number of terms.  Furthermore, the full asymptotic expansion for the modified Bessel function is also well-known (cf. (4.12.7) in \cite{AAR99}).

However, for our present purposes we need only the two terms in the theorem statement, along with the fact that as $x \to \infty$, we have
\begin{equation}
\label{E:IAsymp}
I_s(x) = \frac{e^x}{\sqrt{2\pi x}}\left(1 + O\left(\frac{1}{x}\right)\right).
\end{equation}
The asymptotic formulas in Theorem \ref{T:BCAsymp} then follow by applying \eqref{E:IAsymp} to the leading terms in in Theorem \ref{T:BCExpn}.  Since these terms are independent from $r$, the asymptotic differences of the enumeration functions are found by comparing the terms with $\beta_j(r-1)$ and $\beta_j(r)$, again using \eqref{E:IAsymp}.
\begin{corollary}
\label{C:Asymp}
As $n\rightarrow\infty$,
\begin{align*}
B_{d,r-1}(n) - B_{d,r}(n) & \sim
\frac{\pi}{2^{\frac{7}{4}}3^{\frac{3}{4}}d^{\frac{3}{4}}}
\left(\frac{1}{2} - \frac{r}{d} + \frac{1}{2d}\right)
n^{-\frac{5}{4}} e^{\pi\sqrt{\frac{2n}{3d}}}, \\
C_{d,r-1}(n) - C_{d,r}(n) & \sim
\frac{\pi}{2^{\frac{7}{4}}3^{\frac{7}{4}}d^{\frac{3}{4}}}
\left(\frac{1}{2} - \frac{r}{d} + \frac{1}{2d}\right)
n^{-\frac{5}{4}} e^{\pi\sqrt{\frac{2n}{3d}}}.
\end{align*}
\end{corollary}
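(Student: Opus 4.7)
The plan is to obtain this corollary as a direct algebraic consequence of the two-term expansions in Theorem \ref{T:BCExpn}. The crucial observation is that the leading coefficients $\alpha_1$ and $\alpha_2$ are independent of $r$, so when one subtracts the expansion for $B_{d,r}(n)$ from that of $B_{d,r-1}(n)$, the entire leading contribution $\alpha_1 n^{-1/2} I_{-1}(\pi\sqrt{2n/(3d)})$ cancels identically. What remains is the second-order piece
\begin{equation*}
B_{d,r-1}(n) - B_{d,r}(n) = \bigl[\beta_1(r-1) - \beta_1(r)\bigr]\, n^{-1} I_{-2}\!\left(\pi\sqrt{\tfrac{2n}{3d}}\right) + O\!\left(n^{-\tfrac{3}{2}} e^{\pi\sqrt{2n/(3d)}}\right),
\end{equation*}
and the analogous identity for $C_{d,r-1}(n) - C_{d,r}(n)$ with $\beta_2$ replacing $\beta_1$.

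The next step is a short arithmetic computation directly from the closed forms of $\beta_1$ and $\beta_2$. Expanding the quadratic in $r$ and simplifying yields
\begin{equation*}
\beta_1(r-1) - \beta_1(r) = \frac{\pi^2}{6d}\left(\frac{1}{2} - \frac{r}{d} + \frac{1}{2d}\right),
\end{equation*}
and the identical manipulation produces $\beta_2(r-1) - \beta_2(r) = \tfrac{1}{3}\bigl[\beta_1(r-1) - \beta_1(r)\bigr]$, which mirrors the ratio $\alpha_2/\alpha_1 = 1/3$ already recorded in Theorem \ref{T:BCAsymp}. Substituting the Bessel asymptotic \eqref{E:IAsymp} with $s = -2$ and $x = \pi\sqrt{2n/(3d)}$ gives
\begin{equation*}
n^{-1} I_{-2}\!\left(\pi\sqrt{\tfrac{2n}{3d}}\right) \sim \frac{(3d)^{1/4}}{2^{3/4}\pi}\, n^{-\tfrac{5}{4}}\, e^{\pi\sqrt{2n/(3d)}},
\end{equation*}
and multiplying by $\pi^2/(6d)$ and using $6 \cdot 2^{3/4} = 2^{7/4} \cdot 3$ collapses the constant to $\pi/(2^{7/4} 3^{3/4} d^{3/4})$, which is precisely the factor appearing in the first asserted equivalence. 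The second follows immediately via the factor $1/3$.

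Finally, I would observe that the error term $O(n^{-3/2} e^{\pi\sqrt{2n/(3d)}})$ is of strictly smaller order than the retained main contribution $n^{-5/4} e^{\pi\sqrt{2n/(3d)}}$, since $n^{-3/2} = o(n^{-5/4})$; hence the equality displayed above upgrades to the asymptotic equivalence $\sim$ stated in the corollary. There is really no obstacle here beyond bookkeeping of constants, because all of the analytic content has already been packaged into Theorem \ref{T:BCExpn}; the entire proof should occupy only a few lines in the text.
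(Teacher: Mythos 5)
Your proposal is correct and follows exactly the paper's route: the paper likewise deduces the corollary from Theorem \ref{T:BCExpn} by noting that the leading Bessel terms are independent of $r$, so the difference is governed by the $\beta_j(r-1)-\beta_j(r)$ terms together with the asymptotic \eqref{E:IAsymp}, and your constant bookkeeping (including $\beta_2(r-1)-\beta_2(r)=\tfrac13\bigl[\beta_1(r-1)-\beta_1(r)\bigr]$ and the positivity of $\tfrac12-\tfrac{r}{d}+\tfrac{1}{2d}$ needed for the equivalence) checks out.
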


In order to prove these results, we apply Cauchy's Theorem and  recover the coefficients from the generating functions.  Throughout this section we adopt the convenient shorthand notation
\begin{equation*}
F_1(q) := \sB_{d,r}(q) \qquad \text{and} \qquad F_2(q) := \sC_{d,r}(q).
\end{equation*}
Adopting similar shorthand for the coefficients, Cauchy's Theorem then implies that for any $n \geq 1$
\begin{equation}
\label{E:Cauchy}
c_1(n) := B_{d,r}(n)=\frac1{2\pi i}\int_{\calC}\frac{F_1(q)}{q^{n+1}}dq
\qquad \text{and} \qquad
c_2(n) := C_{d,r}(n)=\frac1{2\pi i}\int_{\calC}\frac{F_2(q)}{q^{n+1}}dq,
\end{equation}
with the contour $\calC$ chosen to be the (counter-clockwise) circle with radius $e^{-N}$, where we further define $N :=\frac{\pi}{\sqrt{6dn}}$.  It is convenient to parameterize this contour by setting $q = e^{-z}$, where $z = N + iy$ and $-\pi < y \leq \pi$. Note that we must be sure to use $\tau = \frac{iz}{2\pi}$ when applying automorphic transformations.

We then decompose the contour into $\calC = \calC_1 + \calC_2$, with
\begin{equation*}
\calC_1 := \Big\{ q \in \calC \: : \: |y| < 2N \Big\},
\end{equation*}
and $\calC_2$ consisting of the remaining curve.  We further denote the corresponding contributions to \eqref{E:Cauchy} for $j=1,2$ by
\begin{equation}
\label{E:MjEj}
M_j := \frac1{2\pi i}\int_{\calC_1}\frac{F_j(q)}{q^{n+1}}dq
\qquad \text{and} \qquad
E_j := \frac1{2\pi i}\int_{\calC_2}\frac{F_j(q)}{q^{n+1}}dq,
\end{equation}
as we will see that these integrals, respectively, contribute the main asymptotic term and error terms.

\subsection{Asymptotic behavior near $q=1$.}  We begin by determining the asymptotic behavior of the functions $F_j(q)$ on $\calC_1$, which contributes to the main terms for the coefficients.
\begin{lemma}
\label{L:FonC1}
If $q = e^{-z} \in \calC_1$ and $j = 1,2$, then we have the bounds
\begin{equation*}
F_{j}(q)=e^{\frac{\pi^2}{6dz}}\left(\alpha'_j +\beta'_j(r) z+O\left(z^2\right)\right),
\end{equation*}
where the constants are given by
\begin{eqnarray*}
\begin{array}{ll}\alpha'_1 = 1, & \displaystyle \alpha'_2 = \frac{1}{3}, \\
\displaystyle \beta'_1(r) = \frac{d}{12} - \frac{r}{2} + \frac{r^2}{2d}, \quad
&
\displaystyle \beta'_2 (r) = \frac{11d}{108}-\frac{r}{6}+\frac{r^2}{6d}.
\end{array}
\end{eqnarray*}
\end{lemma}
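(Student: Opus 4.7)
The plan is to reduce everything to the two building blocks that arise from Schur's theorem and Proposition~\ref{P:Chyper}, namely
\[
F_1(q) = \mathscr{B}_{d,r}(q) = \bigl(-q^r,-q^{d-r};q^d\bigr)_\infty, \qquad F_2(q) = F_1(q)\cdot g_3\!\left(-q^r;q^d\right).
\]
Thus it suffices to establish the expansion for $F_1$ (giving $\alpha_1'$ and $\beta_1'(r)$) and an expansion of the form $g_3(-q^r;q^d) = \tfrac13 + c_1(d)\,z + O(z^2)$, and then multiply.

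\textbf{Step 1: Modular inversion for $F_1$.} Writing $F_1 = -q^{-d/12+r/2}\vartheta(\tfrac12 + r\tau; d\tau)/\eta(d\tau)$ as in Proposition~\ref{P:Bmod} and setting $\tau = iz/(2\pi)$, I would apply \eqref{E:etainv} to $\eta(d\tau)$ and \eqref{E:thetainv} (with $w = \tfrac12 + r\tau$, modular variable $d\tau$) to the theta numerator. The $\eta$ transformation yields $\eta(d\tau)=\sqrt{2\pi/(dz)}\,e^{-\pi^2/(6dz)}(1+O(e^{-c/z}))$ in the cone $|y|<2N$. For $\vartheta$, the exponential factor $e^{-\pi i w^2/(d\tau)}$ produces an explicit combination $e^{-\pi^2/(2dz)-\pi ir/d}e^{r^2z/(2d)}$, while the transformed theta $\vartheta(w/(d\tau);-1/(d\tau))$ is analyzed via the product \eqref{E:thetaprod} with $\tilde q=e^{-4\pi^2/(dz)}$ and $\tilde\zeta = e^{2\pi ir/d}e^{2\pi^2/(dz)}$; only the single factor $1-\tilde\zeta\tilde q^0$ contributes a large term (all others are $1+O(e^{-c/z})$), and its dominant piece $-\tilde\zeta$ exactly cancels the large $e^{-\pi^2/(2dz)}$ coming from the quadratic exponential. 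Combining and matching with the prefactor $-q^{-d/12+r/2}=-e^{z(d/12-r/2)}$ collapses the leading growth to $e^{\pi^2/(6dz)}$, and the remaining regular factor is $\exp\!\bigl(z(d/12-r/2+r^2/(2d))\bigr)(1+O(e^{-c/z}))$. Taylor-expanding this exponential yields $\alpha_1'=1$ and $\beta_1'(r)=d/12-r/2+r^2/(2d)$.

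\textbf{Step 2: Series expansion of $g_3$.} Rather than invoking a mock modular transformation (which would be overkill since we only need an expansion near $q=1$), I would work directly with the defining sum \eqref{E:Univg3}. Setting $a_n(z) := e^{-zdn(n+1)}/\bigl[(-e^{-zr};e^{-zd})_{n+1}(-e^{-z(d-r)};e^{-zd})_{n+1}\bigr]$, one has $a_n(0)=4^{-n-1}$, so termwise summation gives the leading value $\sum_{n\geq 0}4^{-n-1}=\tfrac13$. Logarithmic differentiation at $z=0$ yields
\[
\frac{a_n'(0)}{a_n(0)} = -dn(n+1)+\tfrac12\sum_{k=0}^n\bigl[(r+dk)+(d-r+dk)\bigr] = \tfrac{d}{2}(1-n^2),
\]
so $\sum_n a_n'(0)=\tfrac{d}{2}\sum_{n\geq 0}4^{-n-1}(1-n^2)=\tfrac{d}{2}\cdot\tfrac{4}{27}=\tfrac{2d}{27}$, yielding $g_3(-q^r;q^d)=\tfrac13+\tfrac{2d}{27}z+O(z^2)$ with uniform control on $\mathcal{C}_1$ (the sum is absolutely convergent with geometric tails, so term-by-term differentiation and remainder estimates are straightforward).

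\textbf{Step 3: Assemble.} Multiplying the expansions from Steps 1 and 2 gives $F_2=e^{\pi^2/(6dz)}\bigl(\tfrac13+z\bigl(\tfrac{\beta_1'(r)}{3}+\tfrac{2d}{27}\bigr)+O(z^2)\bigr)$, and the arithmetic $\tfrac{1}{3}(\tfrac{d}{12}-\tfrac{r}{2}+\tfrac{r^2}{2d})+\tfrac{2d}{27}=\tfrac{11d}{108}-\tfrac{r}{6}+\tfrac{r^2}{6d}$ produces exactly $\beta_2'(r)$.

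The main technical obstacle I anticipate is the bookkeeping in Step 1: keeping track of the cancellations between the quadratic exponential $e^{-\pi i w^2/(d\tau)}$, the dominant factor $-\tilde\zeta$ inside the transformed theta product, and the explicit $q$-power prefactor, all while verifying that the error terms from the modular inversions are uniformly $O(e^{-c/z})$ throughout the major arc $\mathcal{C}_1$ (so they can be absorbed into the $O(z^2)$ remainder after multiplication by $e^{\pi^2/(6dz)}$ in the subsequent Circle Method estimate). Steps 2 and 3 are elementary once Step 1 is in hand.
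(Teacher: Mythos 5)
Your proposal is correct and follows essentially the same route as the paper: modular inversion of the $\eta$/$\vartheta$ quotient from Proposition \ref{P:Bmod} to get $F_1(q)=e^{\frac{\pi^2}{6dz}+z\beta_1'(r)}\left(1+O\left(e^{-c\,\mathrm{Re}(1/z)}\right)\right)$, followed by a direct termwise Taylor expansion of $g_3\left(-q^r;q^d\right)=\tfrac13+\tfrac{2d}{27}z+O\left(z^2\right)$ via logarithmic differentiation, and multiplication through \eqref{E:C=B}. The constants, including the evaluation $G'(0)=\tfrac{2d}{27}$ and the arithmetic giving $\beta_2'(r)=\tfrac{11d}{108}-\tfrac{r}{6}+\tfrac{r^2}{6d}$, match the paper's computation exactly.
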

\begin{proof}
We begin with $F_1(q)$, recalling Proposition \ref{P:Bmod}.  Combined with the inversion formulas \eqref{E:etainv} and \eqref{E:thetainv}, this gives
\begin{equation*}
\sB_{d,r}(q) = -q^{-\frac{d}{12} + \frac{r}2} \frac{\vartheta \left( \frac12 + \frac{irz}{2\pi}; \frac{idz}{2\pi}\right)}{\eta \left(\frac{idz}{2\pi}\right)} =
-\frac{ie^{-\frac{\pi ir}{d}+z\left(\frac{d}{12}-\frac{r}{2}+\frac{r^2}{2d}\right)-\frac{\pi^2}{2dz}}\vartheta\left(-\frac{\pi i}{dz}+\frac{r}{d}; \frac{2\pi i}{dz}\right)}{\eta\left(\frac{2\pi i}{dz}\right)}.
\end{equation*}
Recalling \eqref{E:eta}, \eqref{E:thetaprod}, and \eqref{E:thetaneg},  we find a (uniform) bound for \eqref{E:BMaj} on $\calC_1$, namely
\begin{equation}
\label{E:BMaj}
\sB_{d,r}(q) = e^{\frac{\pi^2}{6dz}+z\left(\frac{d}{12}-\frac{r}{2}+\frac{r^2}{2d}\right)}\left(1+O\left(e^{-\frac{2\pi^2}{dz}}\right)\right).
\end{equation}

Turning next to $F_2(q)$, by \eqref{E:C=B} and \eqref{E:BMaj}, we find its asymptotic expansion by first directly calculating the Taylor expansion around $z=0$ for the convergent sum
\begin{equation}
\label{E:Gsum}
G(q):=\sum_{n\geq 0}\frac{q^{dn(n+1)}}{\left(-q^r, -q^{d-r}; q^d\right)_{n+1}}=G(0)+G'(0)z+O\left(z^2\right).
\end{equation}
The constant term evaluates to
\begin{equation*}
G(0)=\frac14\sum_{n\geq 0}\frac1{4^n}=\frac13.
\end{equation*}
To calculate the derivative, we use the fact that
\begin{equation*}
\frac{dG}{dz} = \frac{dG}{dq} \frac{dq}{dz} = -q \frac{dG}{dq}
\end{equation*}
and then apply logarithmic differentiation to each summand in order to find the evaluation
\begin{align*}
G'(0)&=-\frac14\sum_{n\geq 0}\frac1{4^n}
\left(dn^2+dn - \sum_{j=0}^n
\left(\frac{r+dj}{2}+\frac{d+dj-r}{2}\right)\right)
=-\frac{d}{8}\sum_{n\geq 0}\frac{\left(n^2-1\right)}{4^n}=\frac{2d}{27}.
\end{align*}
Combining \eqref{E:C=B}, \eqref{E:BMaj}, and \eqref{E:Gsum} then gives the claim.
\end{proof}

\subsection{Asymptotic behavior away from $q=1$}
We next determine the asymptotic behavior of the $F_j(q)$ on $\calC_2$.  It is sufficient to find asymptotic bounds, as this contour only contributes to the error term in the overall formulas for the coefficients.
\begin{lemma}
\label{L:FonC2}
If $q \in \mathcal{C}_2$, then the following bounds are satisfied:
\begin{enumerate}
\item
$\displaystyle F_1(q) \ll e^{\frac{\pi \sqrt{n}}{5\sqrt{6d}}},$
\item
$\displaystyle F_2(q) \ll n e^{\frac{\pi \sqrt{2n}}{5\sqrt{3d}}}.$
\end{enumerate}
\end{lemma}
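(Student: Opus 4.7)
The plan is to extend the modular-inversion strategy of Lemma \ref{L:FonC1} from $\calC_1$ to $\calC_2$. For part (i), I would start from the modular representation $\sB_{d,r}(q) = -q^{-d/12+r/2}\vartheta(\tfrac12+r\tau;d\tau)/\eta(d\tau)$ supplied by Proposition \ref{P:Bmod} and apply the inversion formulas \eqref{E:etainv} and \eqref{E:thetainv}, exactly as in the proof of Lemma \ref{L:FonC1}. This produces a factorization $\sB_{d,r}(q) = e^{\pi^2/(6dz)}\cdot H_1(z)$ in which $H_1(z)$ is built from a polynomial exponential in $z$ together with Pochhammer and theta factors in the dual nome $\hat q := e^{-4\pi^2/(dz)}$. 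On $\calC_2$ a direct computation yields
\[
\re\!\left(\frac{\pi^2}{6dz}\right) = \frac{\pi^2 N}{6d(N^2+y^2)} \le \frac{\pi^2}{30dN} = \frac{\pi\sqrt{n}}{5\sqrt{6d}},
\]
with the maximum attained at $|y|=2N$. Once $|H_1(z)|$ is known to be uniformly $O(1)$ on $\calC$, the bound in (i) follows at once.

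For part (ii), I would exploit the factorization $\sC_{d,r}(q) = \sB_{d,r}(q)\,g_3(-q^r;q^d)$ of \eqref{E:C=B} together with the first identity of Proposition \ref{P:Chyper}, which I rearrange to give
\[
g_3(-q^r;q^d) \;=\; \frac{1}{(q^d;q^d)_\infty}\sum_{n\in\Z}\frac{(-1)^n q^{3dn(n+1)/2}}{1+q^{r+dn}}.
\]
Modular inversion of $1/(q^d;q^d)_\infty$ contributes another copy of $e^{\pi^2/(6dz)}$ times a bounded remainder, while the bilateral sum is at most polynomially large in $n$: the Gaussian factor $|q|^{3dn(n+1)/2}$ restricts the dominant contribution to a bounded range of indices, and each denominator $1+q^{r+dn}$ can only vanish to order $N\sim 1/\sqrt n$, so contributes a factor of at most $O(\sqrt n)$. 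This yields $|g_3(-q^r;q^d)| \ll n\cdot e^{\pi\sqrt{n}/(5\sqrt{6d})}$ on $\calC_2$, which combined with the bound from (i) produces $|\sC_{d,r}(q)| \ll n\,e^{2\pi\sqrt{n}/(5\sqrt{6d})} = n\,e^{\pi\sqrt{2n}/(5\sqrt{3d})}$ as required.

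The hard part will be establishing the uniform boundedness of the residual factor $H_1(z)$ in part (i) (and of its analogue for $1/(q^d;q^d)_\infty$ in part (ii)). As $|y|$ increases toward $\pi$, $\re(1/z)$ shrinks to order $N/\pi^2$, so $|\hat q|$ approaches $1^-$ and individual dual-variable products such as $(\hat q;\hat q)_\infty$ degenerate to $0$ while some theta factors simultaneously blow up. The cleanest way to handle this is to rewrite each relevant quotient via Jacobi's triple product identity, expressing it as a ratio of a rapidly-convergent theta sum to the same $(\hat q;\hat q)_\infty$, so that the offending factors cancel explicitly and only a Gaussian-type bound on the remaining theta sum is needed.
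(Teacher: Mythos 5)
Your outline follows the paper's own route almost step for step: part (i) is the inversion of Proposition \ref{P:Bmod} via \eqref{E:etainv} and \eqref{E:thetainv} together with the elementary estimate $\re\left(\frac 1z\right)\leq\frac{1}{5N}$ on $\calC_2$, and part (ii) combines the part (i) bound with the inversion of $\left(q^d;q^d\right)_\infty^{-1}$ and a polynomial bound on the bilateral sum from Proposition \ref{P:Chyper}, which is exactly the paper's argument. (Two small points there: roughly $N^{-1/2}$ indices of the bilateral sum are non-negligible, not a bounded range, and each term is only $O(1/N)$, so the correct crude bound is $\ll N^{-2}\ll n$ as in the paper; this only affects the harmless polynomial factor.)

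The genuine gap is the step you yourself flag and defer: the uniform boundedness of $H_1(z)$ on $\calC_2$, and likewise the ``bounded remainder'' you claim for the inverted $\left(q^d;q^d\right)_\infty^{-1}$ in (ii). This cannot be repaired in the form you propose, because the claim is false. Carrying out the inversion gives $\sB_{d,r}(q)=e^{\frac{\pi^2}{6dz}+z\left(\frac{d}{12}-\frac r2+\frac{r^2}{2d}\right)}\prod_{m\geq 0}\left(1-e^{\frac{2\pi i r}{d}}\hat q^{\,m+\frac12}\right)\left(1-e^{-\frac{2\pi i r}{d}}\hat q^{\,m+\frac12}\right)$ with $\hat q=e^{-\frac{4\pi^2}{dz}}$, and on the outer part of $\calC_2$ this residual product is genuinely unbounded: for instance for $d=3$, $r=1$ and $y$ near $\frac{\pi}{3}$ one computes $\log\left|\sB_{3,1}(q)\right|\sim\frac{\pi^2}{108N}$ while $e^{\frac{\pi^2}{18}\re(1/z)}=e^{O(N)}$ there, so $|H_1|$ grows like $e^{c/N}$; this is the growth of $\sB_{d,r}$ at cusps other than $0$ and is not an artifact of a bad factorization, so the triple-product rewriting does not help (after the $\vartheta/\eta$ cancellation there is no $(\hat q;\hat q)_\infty$ left to cancel). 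The same problem hits your part (ii): at $y=\frac{2\pi}{d}\in\calC_2$ one has $q^d$ again close to $1$, so $\left(q^d;q^d\right)_\infty^{-1}\asymp e^{\frac{\pi^2}{6dN}}$, far beyond $e^{\frac{\pi^2}{6d}\re(1/z)}$ times a bounded factor. What a complete proof needs is control of the $F_j$ near every root of unity on $\calC_2$ (a cusp-by-cusp or Farey-arc analysis, or an elementary comparison of $\log|F_j(q)|$ with $\log F_j(|q|)$ quantifying the loss off the positive real axis); note that for Theorem \ref{T:BCExpn} any bound $\ll e^{c\sqrt n}$ with $c<\frac{\pi}{\sqrt{6d}}$ would suffice. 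To be fair, the paper's own proof simply asserts the two factor-wise bounds on $\calC_2$ as immediate consequences of the inversion formulas, so you have correctly isolated where the real difficulty lies — but your proposal does not supply the missing estimate, and the intermediate statement it relies on is not true as stated.
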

\begin{proof}
We begin by observing that the inversion formulas used in proving \eqref{E:BMaj} also immediately lead to a bound for $F_1(q)$ on $\calC_2$.  Namely, we have
\begin{equation*}
\sB_{d,r}(q) \ll e^{\frac{\pi^2}{6d} \re \left( \frac{1}{z} \right)}.
\end{equation*}

In order to bound $F_2(q)$, we recall Proposition \ref{P:Chyper} and estimate the additional pieces individually.  We first address the infinite product, again using \eqref{E:etainv} to conclude that
\begin{equation*}
\frac{1}{\left( q^d ; q^d \right)_\infty} \ll |z|^{\frac12} e^{\frac{\pi^2}{6d} \re \left( \frac{1}{z} \right)}.
\end{equation*}
It remains to bound the sum.  We use the fact that $\re(z) = N$ on $\calC_2$ and calculate the (rough) bound
\begin{equation*}
\sum_{n\in\Z}(-1)^n \frac{q^{\frac{3dn(n+r)}{2}}}{1+q^{dn+r}} \ll\frac{1}{1-e^{-N}}\sum_{n\geq 0} e^{-nN} \ll \frac1{N^2}\ll n.
\end{equation*}
Thus
\begin{align*}
G_2(q) & \ll |z|^\frac12 n e^{\frac{\pi^2}{3d} \re\left( \frac{1}{z} \right)}.
\end{align*}
The statement now follows because $|z| \ll 1$ on all of $\calC$, and furthermore, for $q \in \mathcal{C}_2$ we have the additional inequality
\begin{equation*}
\re \left( \frac{1}{z} \right) = \frac{\re (z)}{\re (z)^2 + \im(z)^2} \leq \frac{1}{5N}.
\end{equation*}
\end{proof}

\subsection{Asymptotic formulas for coefficients}
We now complete the proofs of Theorem \ref{T:BCExpn} and Corollary \ref{C:Asymp} by plugging the bounds for the $F_j(q)$ into \eqref{E:MjEj}.  We begin by considering the first two terms from Lemma \ref{L:FonC1}, and we relate the corresponding integrals to Bessel functions.
In particular, Wright's calculations in Section 5 of \cite{Wri71} apply directly to the present situation, implying that
\begin{align}
\label{E:MainBessel}
\frac{1}{2\pi i} & \int_{\mathcal{C}_1} \frac{e^{\frac{\pi^2}{6dz}} \left( \alpha_j' + \beta_j' (r) z \right)}{q^{n+1}} dq \\
& =\alpha_j'\left(\frac{\pi}{\sqrt{6d}}\right) n^{-\frac12} I_{-1}\left(\pi\sqrt{\frac{2n}{3d}}\right)
+\beta_j'(r)\frac{\pi^2}{6d}n^{-1}I_{-2}\left(\pi\sqrt{\frac{2n}{3d}}\right)
+O\left(n^{-1} e^{\frac{\pi}{2}\sqrt{\frac{3n}{2d}}}\right). \notag
\end{align}

We now turn to the error terms.  Using the fact that $|z| \leq \sqrt{5} N$ on $\calC_1$, we find that the error terms from Lemma \ref{L:FonC1} for either $j=1,2$ contribute
\begin{equation}
\label{E:C1Err}
\int_{\mathcal{C}_1} e^{nN+\frac{\pi^2}{6d}\text{Re}\left(\frac1z\right)}|z|^2 dz\ll N^3 e^{\frac{\pi\sqrt{2n}}{\sqrt{3d}}}\ll n^{-\frac32} e^{\frac{\pi\sqrt{2n}}{\sqrt{3d}}}.
\end{equation}
The bounds from Lemma \ref{L:FonC2} on $\calC_2$ give a contribution with an exponentially lower order, so the overall error is given by \eqref{E:C1Err}.  Inserting \eqref{E:MainBessel} and \eqref{E:C1Err} into \eqref{E:MjEj}, we then obtain Theorem \ref{T:BCExpn}.

\section{Probabilistic interpretation of universal mock theta functions}
\label{S:Prob}

In this section we further examine the combinatorial properties of Schur-type partitions and consequently prove the amazing fact that the universal mock theta function at real arguments naturally occurs as the conditional probability of events in simple probability spaces.  This phenomenon was previously observed for individual examples of Ramanujan's mock theta functions (see \cite{Wat36} for notation), including $\xi(q)$ \cite{AEPR07} and $\phi(q)$ \cite{BMM13}.  However, our current results are significantly more fundamental due to the underlying importance of the universal mock theta function.

Suppose that $0 < q < 1$ is fixed, and let $E_1, E_2, \dots$ be a sequence of independent events that individually occur with probabilities
\begin{equation}
\label{E:ProbE}
p_j = \Prob(E_j) := \frac{q^j}{1+q^j}.
\end{equation}
We also denote the complementary events by $F_j := E_j^c$, which have corresponding probabilities $\o{p}_j := \Prob(F_j) = 1 - p_j = \frac{1}{1+q^j}.$
For any events $R$ and $S$, we adopt the space-saving notational conventions $RS := R \cap S.$

If $d \geq 3$ and $1 \leq r < \frac{d}{2}$, we consider certain events defined in terms of the sequence of $E_j$s, although we first introduce one more notational shorthand, writing $E_n^k := E_{nd+k}$ (with similar notation for the complementary $F$s).  We now define the events
\begin{align}
U_{d,r} & := \bigcap_{n \geq 0} \Big(E_{n}^r F_{n}^{d-r} F_{n+1}^0 \cup F_{n}^r \Big)
\Big(E_{n}^{d-r} F_{n+1}^{0} F_{n+1}^r \cup F_{n}^{d-r} \Big)
\Big(E_{n+1}^{0} F_{n+1}^{r} F_{n+1}^{d-r} F_{n+2}^0 \cup F_{n+1}^{0} \Big), \notag \\
\label{E:UV}
V_{d,r} & := \bigcap_{n \geq 1} \Big(E_{n}^r F_{n}^{d-r} F_{n+1}^0 \cup F_{n}^r \Big)
\Big(E_{n}^{d-r} F_{n+1}^{0} F_{n+1}^r \cup F_{n}^{d-r} \Big)
\Big(E_{n+1}^{0} F_{n+1}^{r} F_{n+1}^{d-r} F_{n+2}^0 \cup F_{n+1}^{0} \Big).
\end{align}
In words, $U_{d,r}$ is the event such that if $E_{nd+r}$ occurs, then $E_{nd+d-r}$ and $E_{(n+1)d}$ do not occur; if $E_{nd +d-r}$ occurs, then $E_{(n+1)d}$ and $E_{(n+1)d+r}$ do not occur; and if $E_{(n+1)d}$ occurs, then $E_{(n+1)d+r}$, $E_{(n+1)d+d-r}$ and $E_{(n+2)d}$ do not occur.  The event $V_{d,r}$ has the same conditions beginning only from $E_{d+r}$, with no restrictions on whether $E_{r}, E_{d-r}$, and $E_d$ occur.
Note that the events $U_{d,r}$ and $V_{d,r}$ are independent from any $E_j$ with $j \not\equiv 0, \pm r \pmod{d}.$

\begin{theorem}
\label{T:prob}
Suppose that $0 < q < 1$, $d \geq 3$ and $1 \leq r < \frac{d}{2}.$  The following identities hold:
\begin{enumerate}
\item
\label{T:prob:U|V}
$\displaystyle \Prob(U_{d,r} \mid V_{d,r}) = \frac{1}{\left(1+q^r\right) \left(1+q^{d-r}\right) \left(1+q^d\right)} \cdot \frac{1}{g_3\left(-q^r; q^d\right)}$,
\item
\label{T:prob:F|U}
$\displaystyle \Prob(F_r F_{d-r} F_{d} \mid U_{d,r}) = g_3\left(-q^r; q^d\right).$
\end{enumerate}
\end{theorem}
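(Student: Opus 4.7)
The plan is to interpret $U_{d,r}$ and $V_{d,r}$ as probabilistic encodings of Schur-type partitions and deduce both identities from Theorem~\ref{T:univ} (specifically \eqref{E:C=B}). The key observation is that each of the three factors making up the defining intersection in \eqref{E:UV} has the form $(E_X \cdot [\text{forbid specified later } E\text{'s}]) \cup F_X$, so intersecting over $n \geq 0$ is equivalent to requiring that the random set $S := \{j \in \N : E_j \text{ occurs}, \ j \equiv 0, \pm r \pmod{d}\}$ satisfies the Schur gap conditions: all gaps are at least $d$, and strictly greater than $d$ whenever the larger index is divisible by $d$. This translation requires checking each pair of consecutive admissible indices from $\{r, d-r, d, d+r, 2d-r, 2d, \dots\}$ against the Schur rules, and is the only substantive combinatorial step in the proof.

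Because $p_j = q^j/(1+q^j)$ and $\overline{p}_j = 1/(1+q^j)$, any fixed configuration in which $S$ equals an admissible set $T$ has probability $(\prod_{j \in T} q^j)/\prod_j (1+q^j)$, where the denominator product runs over all admissible $j$. Summing over Schur-admissible $T$ thus yields
\begin{equation*}
\Prob(U_{d,r}) = \frac{\sB_{d,r}(q)}{\left(-q^r, -q^{d-r}, -q^d; q^d\right)_\infty} = \frac{1}{\left(-q^d; q^d\right)_\infty},
\end{equation*}
where the final simplification uses Schur's theorem \eqref{E:Schur}. The same analysis applied to $V_{d,r}$ (the $n=0$ factor is absent, so $E_r, E_{d-r}, E_d$ are unrestricted and the surviving constraints select exactly the Schur partitions with all parts $\geq d+r$, i.e.\ smallest part $>d$) produces
\begin{equation*}
\Prob(V_{d,r}) = \frac{\sC_{d,r}(q)}{\left(-q^{d+r}, -q^{2d-r}, -q^{2d}; q^d\right)_\infty}.
\end{equation*}

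Since $U_{d,r} \subseteq V_{d,r}$, part (i) reduces to computing the ratio $\Prob(U_{d,r})/\Prob(V_{d,r})$. Telescoping via $(1+q^r)(1+q^{d-r})(1+q^d)\left(-q^{d+r}, -q^{2d-r}, -q^{2d}; q^d\right)_\infty = \left(-q^r, -q^{d-r}, -q^d; q^d\right)_\infty$ and substituting $\sC_{d,r}(q) = (-q^r, -q^{d-r}; q^d)_\infty \, g_3(-q^r; q^d)$ from \eqref{E:C=B} yields the claimed formula after cancellation. For part (ii), observe that on the event $F_r F_{d-r} F_d$ all three components of the $n=0$ factor of $U_{d,r}$ collapse to their ``$\cup F$'' branches and are automatically satisfied, so $F_r F_{d-r} F_d \cap U_{d,r} = F_r F_{d-r} F_d \cap V_{d,r}$. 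Since $V_{d,r}$ depends only on the $E_j$ with $j \geq d+r$, it is independent of $F_r F_{d-r} F_d$, which gives
\begin{equation*}
\Prob\bigl(F_r F_{d-r} F_d \cap U_{d,r}\bigr) = \overline{p}_r \overline{p}_{d-r} \overline{p}_d \, \Prob(V_{d,r}).
\end{equation*}
Dividing by $\Prob(U_{d,r})$ and applying part (i) recovers $g_3(-q^r; q^d)$ as desired.

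The main obstacle is the bookkeeping of the first step, namely verifying that intersecting the three-factor structure of \eqref{E:UV} over all $n$ produces precisely the Schur gap condition on $S$. After that, the argument is routine manipulation of Pochhammer symbols combined with the universal mock theta identity \eqref{E:C=B} furnished by Theorem~\ref{T:univ}.
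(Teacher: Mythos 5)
Your proposal is correct, but it reaches the two key probabilities by a genuinely different route than the paper. You evaluate $\Prob(U_{d,r})$ and $\Prob(V_{d,r})$ directly, decomposing each event according to the random set of occurring indices $j\equiv 0,\pm r\pmod d$ and recognizing the resulting sums as $\sB_{d,r}(q)/\left(-q^r,-q^{d-r},-q^d;q^d\right)_\infty$ and $\sC_{d,r}(q)/\left(-q^{d+r},-q^{2d-r},-q^{2d};q^d\right)_\infty$; the paper never performs this summation, but instead derives the three recurrences \eqref{E:Vkd3rec} for the $\Prob(\calV_k)$ (mirroring the combinatorial proof of Proposition \ref{P:frec}), combines them into the single recurrence \eqref{E:UkRec} for $\calU_k=\calV_{kd+r}$, and identifies $\Prob(\calU_k)=h_{d,r}\left(q^{kd}\right)$ via uniqueness of solutions of the $q$-difference equation with boundary value $1$. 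Both arguments then conclude identically through $\sC_{d,r}(q)=\sB_{d,r}(q)\,g_3\left(-q^r;q^d\right)$ from \eqref{E:C=B}, and your handling of part (ii) (the set identity $F_rF_{d-r}F_d\cap U_{d,r}=F_rF_{d-r}F_d\cap V_{d,r}$ together with independence) is actually more explicit than the paper's. Your route is shorter and yields the pleasant closed form $\Prob(U_{d,r})=1/\left(-q^d;q^d\right)_\infty$ via \eqref{E:Schur}, but it silently uses one fact you should state: identifying $\Prob(U_{d,r})$ with a sum over \emph{finite} Schur-admissible sets requires that almost surely only finitely many $E_j$ occur, which follows from Borel--Cantelli since $\sum_j p_j<\infty$ (a priori $U_{d,r}$ also contains configurations with infinitely many occurrences, e.g. all of $E_r,E_{d+r},E_{2d+r},\dots$, and these form an uncountable family that must be discarded as a null set). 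One further small check worth writing out in your ``bookkeeping'' step: the third factor of \eqref{E:UV} forbids occurrences within distance $d$ \emph{above} a multiple of $d$, whereas Schur's condition puts the strict gap \emph{below} the part divisible by $d$; these agree because a gap of exactly $d$ forces both parts to be multiples of $d$ simultaneously. With those two sentences added your argument is complete; what the paper's recurrence approach buys instead is the full family of intermediate values $\Prob(\calV_{kd+r})=h_{d,r}\left(q^{kd}\right)$ and the avoidance of any measure-theoretic decomposition, at the cost of the algebra of merging the three recurrences and an appeal to uniqueness for $q$-difference equations.
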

\begin{proof}
Let $\calV_k$ denote the event that all of the conditions in the definition of $U_{d,r}$ are met beginning from $E_{k}$, with no restrictions on $E_j$ for $j < k$.  For example, $\calV_r = U_{d,r}$, and $\calV_{d+r} = V_{d,r}.$
The following three recurrences follow from the definition of $\calV_j$ in terms of the gap conditions in $U_{d,r}$:
\begin{align}
\Prob(\calV_{kd}) &= p_{kd} \o{p}_{kd+r} \o{p}_{kd+d-r} \o{p}_{(k+1)d} \Prob(\calV_{(k+1)d+r})
+ \o{p}_{kd} \Prob(\calV_{kd+r}), \notag \\
\label{E:Vkd3rec}
\Prob(\calV_{kd+r}) &= p_{kd+r}\o{p}_{kd+d-r} \o{p}_{(k+1)d} \Prob(\calV_{(k+1)d+r})
+ \o{p}_{kd+r} \Prob(\calV_{kd+d-r}), \\
\Prob(\calV_{kd+d-r}) &= p_{kd+d-r} \o{p}_{(k+1)d} \o{p}_{(k+1)d+r} \Prob(\calV_{(k+1)d+d-r})
+ \o{p}_{kd+d-r} \Prob(\calV_{(k+1)d}). \notag
\end{align}

We now combine the three recurrences in \eqref{E:Vkd3rec} into one.  Note that the first recurrence already expresses $\Prob(\calV_{kd})$ in terms of $\Prob(\calV_{jd+r})$ for various $j$, and we can use the second recurrence to do the same for $\Prob(\calV_{kd+d-r})$, finding
\begin{equation*}
\Prob(\calV_{kd+d-r}) = \frac{1}{1-p_{kd+r}} \bigg(\Prob(\calV_{kd+r}) - p_{kd+r}(1-p_{kd+d-r})(1-p_{(k+1)d})\Prob(\calV_{(k+1)d+r})\bigg).
\end{equation*}
Plugging in this formula and the first line of \eqref{E:Vkd3rec} into the third line then gives an identity involving only $\calV_{kd+r}, \calV_{(k+1)d+r},$ and  $\calV_{(k+2)d+r}$.  The resulting expression simplifies to the following recurrence, where we write $\calU_j := \calV_{jd+r}$ to save space
\begin{align}
\label{E:UkRec}
\Prob(\calU_k) & = \Big(p_{kd+r} \o{p}_{kd+d-r} \o{p}_{(k+1)d} +
\o{p}_{kd+r}p_{kd+d-r}\o{p}_{(k+1)d}
+ \o{p}_{kd+r}\o{p}_{kd+d-r}\o{p}_{(k+1)d}\Big) \Prob(\calU_{k+1}) \\
& \qquad \quad + \Big(\o{p}_{kd+r} \o{p}_{kd+d-r} p_{(k+1)d} \o{p}_{(k+1)d+r} \o{p}_{(k+1)+d-r} \o{p}_{(k+2)d} \notag \\
& \qquad \qquad \qquad - \o{p}_{kd+r} p_{kd+d-r} \o{p}_{(k+1)d} p_{(k+1)d+r} \o{p}_{(k+1)+d-r} \o{p}_{(k+2)d}\Big)
\Prob(\calU_{k+2}). \notag
\end{align}

We note that this is analogous to Andrews' proof of Theorem 1 in \cite{And68}, where he derives three recurrences for the Schur-type partitions enumerated by $C_{3,1}(n)$ based on their smallest part.  Furthermore, just as in our proof of Proposition \ref{P:frec}, one can alternatively show \eqref{E:UkRec} directly by by conditioning on whether $E_{kd+r}, E_{kd+d-r}$ or $E_{(k+1)d}$ occur, and then subtracting off the disallowed sequence $F_{kd+r} E_{kd+d-r} F_{(k+1)d} E_{(k+1)d + r} F_{(k+1)d + d-r} F_{(k+2)d}$.

In order to more thoroughly describe the relationship between the events $\calV_j$ and Schur-type partition functions, we renormalize the generating function by defining
\begin{equation}
\label{E:hdr}
h_{d,r}(x) = h_{d,r}(x;q) := \frac{f_{d,r}(x)}{\left(-xq^r, -xq^{d-r}, -xq^d; q^d\right)_\infty}.
\end{equation}
Proposition \ref{P:frec} then becomes
\begin{align}
\label{E:hrec}
h_{d,r}(x) = & \frac{1 + xq^r + xq^{d-r}}{\left(1+xq^r\right) \left(1+xq^{d-r}\right) \left(1+xq^d\right)}
h_{d,r}\left(xq^d\right) \\
& + \frac{xq^d - x^2q^{2d}}{\left(1+xq^r\right) \left(1+xq^{d-r}\right) \left(1+xq^d\right) \left(1+xq^{d+r}\right) \left(1+xq^{2d-r}\right) \left(1+xq^{2d}\right)}h_{d,r}\left(xq^{2d}\right).
\notag
\end{align}

If we now define $\calH_k = \calH_k(q) := h_{d,r}(q^{kd})$ and recall \eqref{E:ProbE}, then \eqref{E:hrec} implies that the recurrence \eqref{E:UkRec} holds with $\calH_k$ in place of $\Prob(\calU_k).$
We observe that as $k \to \infty$, we have the limit $\calH_k \to 1$, because $h_{d,r}(x) \to 1$ as $x \to 0$.  Similarly, we also have $\Prob(\calU_k) \to 1$ since there are no conditions on any $E_j$ in the limit.  This boundary condition guarantees that the recurrence has a unique solution (cf. the theory of $q$-difference equations \cite{And68Q}), hence
\begin{equation*}
\label{E:U=h}
\Prob(\calU_k) = \calH_k(q) = h_{d,r}\left(q^{kd}\right).
\end{equation*}

We can now complete the proof of the theorem.  For part \ref{T:prob:U|V}, we calculate
\begin{equation}
\label{E:U|V}
\Prob(U_{d,r} \mid V_{d,r}) = \frac{\Prob(U_{d,r})}{\Prob(V_{d,r})} = \frac{\Prob(\calU_0)}{\Prob(\calU_1)}
= \frac{f_{d, r}(1)}{\left(1+q^r\right)\left(1+q^{d-r}\right)\left(1+q^d\right) f_{d, r}\left(q^d\right)},
\end{equation}
where the last equality is due to \eqref{E:hdr}.
The theorem statement then follows from \eqref{E:Schur}, \eqref{E:BC=f} and \eqref{E:hdr}, which together imply that $f_{d,r}(q^d) = f_{d,r}(1) \cdot g_3(-q^r; q^d).$

For part \ref{T:prob:F|U}, we similarly have
\begin{equation*}
\Prob(F_r F_{d-r} F_d \mid U_{d,r}) = \frac{\Prob(F_r F_{d-r} F_d \cap U_{d,r})}{\Prob(U_{d,r})}
= \frac{\Prob(F_r F_{d-r} F_d) \Prob(V_{d,r})}{\Prob(U_{d,r})}
= g_3\left(-q^r; q^d\right),
\end{equation*}
where the final equality follows from \eqref{E:ProbE} and the inverse of \eqref{E:U|V}.
\end{proof}
\begin{remark*}
Just as we worked directly with the combinatorics of $q$-difference equations in order to prove Proposition \ref{P:fqseries} (thereby providing short new proofs of \eqref{E:Schur} and \eqref{E:AndrewsC3}), our use of probability arguments above could also be adapted to give new proofs of the results relating generating functions and probability found in Section 4 of \cite{HLR04} and Section 6 \cite{BMM13} (particularly (6.1)).
\end{remark*}

\bigskip

\end{document}